\providecommand{\U}[1]{\protect\rule{.1in}{.1in}}
\newtheorem{theorem}{Theorem}
\theoremstyle{plain}
\newtheorem{corollary}{Corollary}
\newtheorem{example}{Example}
\newtheorem{proposition}{Proposition}
\numberwithin{equation}{section}
\begin{document}
\title[Timelike General Rotational Surfaces in $\mathbb{E}_{1}^{4}$ with Density]{Timelike General Rotational Surfaces in Minkowski 4-Space with Density}
\subjclass[2010]{53A10, 53A35, 53B30.}
\keywords{Timelike general rotational surface, density, weighted mean curvature,
weighted Gaussian curvature.}
\author[A. Kazan, M. Alt\i n and D.W. Yoon]{\bfseries Ahmet Kazan$^{1\ast}$, Mustafa Alt\i n$^{2}$ and Dae Won Yoon$^{3}$}
\address{ \newline
$^{1}$\textit{Department of Computer Technologies, Do\u{g}an\c{s}ehir Vahap
K\"{u}\c{c}\"{u}k Vocational School, Malatya Turgut \"{O}zal University,
Malatya, Turkey} \newline
$^{2}$\textit{Technical Sciences Vocational School, Bing\"{o}l University,
Bing\"{o}l, Turkey} \newline
$^{3}$\textit{Department of Mathematics Education and RINS, Gyeongsang
National University, Jinju 52828, Republic of Korea} \newline
$^{\ast}$\textit{Corresponding author: ahmet.kazan@ozal.edu.tr}}

\begin{abstract}
In this study, we give weighted mean and weighted Gaussian curvatures of two
types of timelike general rotational surfaces with non-null plane meridian
curves in four-dimensional Minkowski space $\mathbb{E}_{1}^{4}$ with density
$e^{\lambda_{1}x^{2}+\lambda_{2}y^{2}+\lambda_{3}z^{2}+\lambda_{4}t^{2}},$
where $\lambda_{i}$ ($i=1,2,3,4$) are not all zero and we give some results
about weighted minimal and weighted flat timelike general rotational surfaces
in $\mathbb{E}_{1}^{4}$ with density. Also, we construct some examples for
these surfaces.

\end{abstract}
\maketitle


\section{\textbf{INTRODUCTION}}

A four-dimensional space (4D) can be thought of as a geometrical extension of
the three-dimensional space (3D). So, the 4D spaces have become one of the
fundamental concepts in expressing modern physics, mathematics, and geometry.
In this context, lots of studies about curves and (hyper)surfaces in 4D
Euclidean, Minkowskian, Galilean, or pseudo-Galilean spaces have been done by
geometers, recently. In particular, general rotational surfaces in the 4D
Euclidean space were introduced by Cole \cite{Cole} and later studied by Moore
\cite{Moore}. After these studies, different characterizations of general
rotational surfaces were interested in many geometers. For instance, Ganchev
and Milousheva studied minimal surfaces and they obtained the necessary and
sufficient condition for the minimality of general rotational surfaces in
$\mathbb{E}^{4}$ \cite{Ganchev}. Dursun and Turgay examined general rotational
surfaces in $\mathbb{E}^{4}$ whose meridian curves lie in two-dimensional
planes and they found all minimal general rotational surfaces by solving the
differential equation that characterizes minimal general rotational surfaces.
Also, they determined all pseudo-umbilical general rotational surfaces in
$\mathbb{E}^{4}$ \cite{Dursun2}. Furthermore, some special examples of general
rotational surfaces in $\mathbb{E}^{4}$ were given, the curvature properties
of these surfaces were investigated and the necessary and sufficient
conditions for general rotational surfaces to become pseudo-umbilical were
given in \cite{Arslan}. For further studies about (general) rotational
surfaces in $\mathbb{E}^{4},$ one can see \cite{Arslan2}, \cite{Dursun},
\cite{Ganchev2}, \cite{Vranceanu}, and etc. Also in \cite{Bencheva}, the
authors studied the timelike general rotational surfaces with non-null
(spacelike or timelike) plane meridian curves in the 4D Minkowski space.

On the other hand, the notion of the weighted manifold has started to be a
popular topic in different areas such as mathematics, physics, and economics,
recently. For instance, manifolds with density arise in physics when
considering surfaces or regions with differing physical density. An object may
have differing internal densities so in order to determine the object's mass
it is necessary to integrate volume weighted with density.

Manifold with density is a Riemannian manifold with positive density function
$\Phi=e^{\phi}$ used to weight volume and area. In terms of the underlying
Riemannian volume $dV_{0}$ and area $dA_{0}$, the new weighted volume and area
are given by%
\[
dV_{\phi}=\Phi dV_{0},\text{ }dA_{\phi}=\Phi dA_{0}.
\]
In this context, from the first variation of weighted area, Gromov firstly
introduced the notion of weighted mean curvature (or $\phi$-mean curvature) of
a hypersurface in an $n$-dimensional Riemannian manifold with density
$e^{\phi}$ as%
\[
H_{\phi}=H-\frac{1}{n-1}\frac{d\phi}{dN},
\]
where $H$ is the mean curvature and $N$ is the unit normal vector field of the
hypersurface \cite{Gromov}. A hypersurface is called weighted minimal (or
$\phi$-minimal) if its weighted mean curvature vanishes. Also, Corvin et al
introduced the notion of generalized weighted Gaussian curvature of a surface
on a manifold as%
\[
K_{\phi}=K-\Delta\phi,
\]
where $\Delta$ is the Laplacian operator and $K$ is the Gaussian curvature of
a surface \cite{Corwin}. A surface is called weighted flat (or $\phi$-flat) if
its weighted Gaussian curvature vanishes.

After these definitions, lots of studies were done by differential geometers
about curves and surfaces in different spaces with density. For instance Hieu
and Nam \cite{Hieu} classified the constant weighted curvature curves in the
plane with a log-linear density. Lopez \cite{lo} considered a log-linear
density $e^{\alpha x+\beta y+\gamma z}$ and he classified the $\phi$-minimal
translation surfaces and the $\phi$-minimal cyclic surfaces in $\mathbb{E}%
^{3}$. Also, in \cite{Yoon1} authors considered a 3D Euclidean space with
density $e^{-x^{2}-y^{2}}$ and they constructed all the helicoidal surfaces in
the space by solving the second-order non-linear ordinary differential
equation with the weighted Gaussian curvature and the weighted mean curvature
functions. Yoon and Y\"{u}zba\c{s}\i~\cite{Yoon2} completely classified affine
translation surfaces with zero weighted mean curvature. Rotational and ruled
surfaces generated by planar curves in $\mathbb{E}^{3}$ with density were
studied in \cite{Altin1} and \cite{Altin6}, respectively and Monge
hypersurfaces in $\mathbb{E}^{4}$ with density were studied in \cite{Altin3}.
In \cite{Altin5}, the authors gave important results for non-null curves with
constant weighted curvature in Lorentz-Minkowski plane with density and in
\cite{Kazanyy}, the authors studied generalized rotation surfaces in
$\mathbb{E}^{4}$ with density. For more details about manifolds with density
and some relative topics we refer to \cite{Altin4}, \cite{Belarbi},
\cite{Kazanyyyy}, \cite{Altin7}, \cite{lo2}, \cite{Morgan}-\cite{Nam},
\cite{Onder}, and etc. So, these studies motivated us to study rotational
surfaces in a 4D Minkowski space with density.

In the second section of this study, we recall the parametric expressions of
two types of timelike general rotational surfaces with non-null plane meridian
curves in $\mathbb{E}_{1}^{4}$. In Section 3, we obtain the weighted mean and
weighted Gaussian curvatures of the timelike general rotational surfaces in
$\mathbb{E}_{1}^{4}$ with density $e^{\lambda_{1}x^{2}+\lambda_{2}%
y^{2}+\lambda_{3}z^{2}+\lambda_{4}t^{2}},$ $\lambda_{i},$ $i\in\{1,2,3,4\},$
not all zero. Also, we give some results about weighted minimal and weighted
flat timelike general rotation surfaces in $\mathbb{E}_{1}^{4}$ with density
and construct some examples for these surfaces.

\section{\textbf{TIMELIKE GENERAL ROTATIONAL SURFACES WITH PLANE MERIDIAN
CURVES IN} $\mathbb{E}_{1}^{4}$}

In this section, we will recall the parametric expressions of two types of
timelike general rotational surfaces with plane meridian curves in
$\mathbb{E}_{1}^{4}$ and give some geometric invariants of these surfaces.

Let $\mathbb{E}_{1}^{4}$ be the 4D Minkowski space endowed with the metric
$\left\langle \text{ },\right\rangle $ of signature $(+,+,+,-)$ and
$Oe_{1}e_{2}e_{3}e_{4}$ be a fixed orthonormal coordinate system, i.e.
$e_{1}^{2}=e_{2}^{2}=e_{3}^{2}=-e_{4}^{2}=1$, giving the orientation of
$\mathbb{E}_{1}^{4}$. The standard flat metric is given in local coordinates
by $dx_{1}^{2}+dx_{2}^{2}+dx_{3}^{2}-dx_{4}^{2}$.

Let we consider a surface $\mathcal{S}_{1}$ in $\mathbb{E}_{1}^{4}$
parametrized by%
\begin{equation}
\mathcal{S}_{1}:X_{1}(u,v)=(f(u)\cos(\alpha v),f(u)\sin(\alpha v),g(u)\cosh
(\beta v),g(u)\sinh(\beta v)), \label{s1}%
\end{equation}
where $u\in J\subset%
\mathbb{R}
,$ $v\in\lbrack0,2\pi);$ $\alpha,$ $\beta$ are positive constants; $f(u)$ and
$g(u)$ are smooth functions, satisfying the inequalities%
\begin{equation}
\alpha^{2}f^{2}(u)-\beta^{2}g^{2}(u)<0,\text{ \ \ }f^{\prime2}(u)+g^{\prime
2}(u)>0. \label{s2}%
\end{equation}
Here, since the coefficients of the first fundamental form of $\mathcal{S}%
_{1}$ are
\[
E^{\mathcal{S}_{1}}=\text{\ }f^{\prime2}(u)+g^{\prime2}(u)>0,\text{
}F^{\mathcal{S}_{1}}=0,\text{ }G^{\mathcal{S}_{1}}=\alpha^{2}f^{2}%
(u)-\beta^{2}g^{2}(u)<0,
\]
the surface $\mathcal{S}_{1}$ is timelike and the meridian curve $c_{1}%
:x_{1}(u)=(f(u),0,g(u),0)$, $u\in J\subset%
\mathbb{R}
,$ is spacelike. By considering tangent frame fields of $\mathcal{S}_{1}$
defined by%
\begin{equation}
\left.
\begin{array}
[c]{l}%
t_{1}^{\mathcal{S}_{1}}=\frac{1}{\sqrt{f^{\prime2}+g^{\prime2}}}\left(
f^{\prime}\cos(\alpha v),f^{\prime}\sin(\alpha v),g^{\prime}\cosh(\beta
v),g^{\prime}\sinh(\beta v)\right)  ,\\
t_{2}^{\mathcal{S}_{1}}=\frac{1}{\sqrt{(\beta g)^{2}-(\alpha f)^{2}}}\left(
-\alpha f\sin(\alpha v),\alpha f\cos(\alpha v),\beta g\sinh(\beta v),\beta
g\cosh(\beta v)\right)  ,
\end{array}
\right\}  \label{s3}%
\end{equation}
we can choose the following normal frame fields of $\mathcal{S}_{1}:$%
\begin{equation}
\left.
\begin{array}
[c]{l}%
n_{1}^{\mathcal{S}_{1}}=\frac{1}{\sqrt{f^{\prime2}+g^{\prime2}}}\left(
g^{\prime}\cos(\alpha v),g^{\prime}\sin(\alpha v),-f^{\prime}\cosh(\beta
v),-f^{\prime}\sinh(\beta v)\right)  ,\\
n_{2}^{\mathcal{S}_{1}}=\frac{1}{\sqrt{(\beta g)^{2}-(\alpha f)^{2}}}\left(
-\beta g\sin(\alpha v),\beta g\cos(\alpha v),\alpha f\sinh(\beta v),\alpha
f\cosh(\beta v)\right)  .
\end{array}
\right\}  \label{s4}%
\end{equation}
Throughout this study, we state $f=f(u),$ $g=g(u),$ $f^{\prime}=\frac
{df(u)}{du},$ $g^{\prime}=\frac{dg(u)}{du},$ and so on.

Similarly, let we consider an another surface $\mathcal{S}_{2}$ in
$\mathbb{E}_{1}^{4}$ parametrized by%
\begin{equation}
\mathcal{S}_{2}:X_{2}(u,v)=(f(u)\cos(\alpha v),f(u)\sin(\alpha v),g(u)\sinh
(\beta v),g(u)\cosh(\beta v)), \label{s5}%
\end{equation}
where $u\in J\subset%
\mathbb{R}
,$ $v\in\lbrack0,2\pi);$ $\alpha,$ $\beta$ are positive constants; $f(u)$ and
$g(u)$ are smooth functions, satisfying the inequalities%
\begin{equation}
\text{\ }f^{\prime2}(u)-g^{\prime2}(u)<0,\text{ \ \ }\alpha^{2}f^{2}%
(u)+\beta^{2}g^{2}(u)>0. \label{s6}%
\end{equation}
Here, since the coefficients of the first fundamental form of $\mathcal{S}%
_{2}$ are
\[
E^{\mathcal{S}_{2}}=\text{\ }f^{\prime2}(u)-g^{\prime2}(u)<0,\text{
}F^{\mathcal{S}_{2}}=0,\text{ }G^{\mathcal{S}_{2}}=\alpha^{2}f^{2}%
(u)+\beta^{2}g^{2}(u)>0,
\]
the surface $\mathcal{S}_{2}$ is timelike and the meridian curve $c_{2}%
:x_{2}(u)=(f(u),0,0,g(u))$, $u\in J\subset%
\mathbb{R}
,$ is timelike. By considering tangent frame fields defined by%
\begin{equation}
\left.
\begin{array}
[c]{l}%
t_{1}^{\mathcal{S}_{2}}=\frac{1}{\sqrt{g^{\prime2}-f^{\prime2}}}\left(
f^{\prime}\cos(\alpha v),f^{\prime}\sin(\alpha v),g^{\prime}\sinh(\beta
v),g^{\prime}\cosh(\beta v)\right)  ,\\
t_{2}^{\mathcal{S}_{2}}=\frac{1}{\sqrt{(\alpha f)^{2}+(\beta g)^{2}}}\left(
-\alpha f\sin(\alpha v),\alpha f\cos(\alpha v),\beta g\cosh(\beta v),\beta
g\sinh(\beta v)\right)  ,
\end{array}
\right\}  \label{s7}%
\end{equation}
we can consider the following normal frame fields of $\mathcal{S}_{2}:$%
\begin{equation}
\left.
\begin{array}
[c]{l}%
n_{1}^{\mathcal{S}_{2}}=\frac{1}{\sqrt{(\alpha f)^{2}+(\beta g)^{2}}}\left(
\beta g\sin(\alpha v),-\beta g\cos(\alpha v),\alpha f\cosh(\beta v),\alpha
f\sinh(\beta v)\right)  ,\\
n_{2}^{\mathcal{S}_{2}}=\frac{1}{\sqrt{-f^{\prime2}+g^{\prime2}}}\left(
g^{\prime}\cos(\alpha v),g^{\prime}\sin(\alpha v),f^{\prime}\sinh(\beta
v),f^{\prime}\cosh(\beta v)\right)  .
\end{array}
\right\}  \label{s8}%
\end{equation}
We call the surfaces $\mathcal{S}_{1}$ and $\mathcal{S}_{2}$ as
\textit{general rotational surface of first type }and\textit{ general
rotational surface of second type}, respectively.

Also, the mean curvature vectors $\overrightarrow{H^{\mathcal{S}_{i}}}$ and
Gaussian curvatures $K^{\mathcal{S}_{i}}$ of these two types of timelike
general rotational surfaces $\mathcal{S}_{i}$ parametrized by (\ref{s1}) and
(\ref{s5}) are given by%
\begin{equation}
\overrightarrow{H^{\mathcal{S}_{i}}}=\frac{\left(  f^{\prime}g^{\prime\prime
}-f^{\prime\prime}g^{\prime}\right)  \left(  (\alpha f)^{2}+(-1)^{i}(\beta
g)^{2}\right)  -(-1)^{i}\left(  \alpha^{2}fg^{\prime}+\beta^{2}gf^{\prime
}\right)  \left(  -(-1)^{i}f^{\prime2}+g^{\prime2}\right)  }{2\left(
-(-1)^{i}f^{\prime2}+g^{\prime2}\right)  ^{3/2}\left(  (\alpha f)^{2}%
+(-1)^{i}(\beta g)^{2}\right)  }n_{i}^{\mathcal{S}_{i}}, \label{s9}%
\end{equation}
where
\begin{equation}
{\small {H_{j}^{\mathcal{S}_{i}}=-\frac{\left(  (-1)^{i}+(-1)^{j}\right)
\left(  \left(  \alpha^{2}fg^{\prime}+\beta^{2}gf^{\prime}\right)  \left(
-(-1)^{i}f^{\prime2}+g^{\prime2}\right)  -(-1)^{i}\left(  (\alpha
f)^{2}+(-1)^{i}(\beta g)^{2}\right)  \left(  f^{\prime}g^{\prime\prime
}-f^{\prime\prime}g^{\prime}\right)  \right)  }{4\left(  (\alpha
f)^{2}+(-1)^{i}(\beta g)^{2}\right)  \left(  -(-1)^{i}f^{\prime2}+g^{\prime
2}\right)  ^{3/2}}}} \label{s10}%
\end{equation}
and%
\begin{equation}
K^{\mathcal{S}_{i}}=\frac{\alpha^{2}\beta^{2}\left(  gf^{\prime}-fg^{\prime
}\right)  ^{2}\left(  -(-1)^{i}f^{\prime2}+g^{\prime2}\right)  +\left(
(-1)^{i}(\alpha f)^{2}+(\beta g)^{2}\right)  \left(  \alpha^{2}fg^{\prime
}+\beta^{2}gf^{\prime}\right)  \left(  f^{\prime\prime}g^{\prime}-f^{\prime
}g^{\prime\prime}\right)  }{\left(  (\alpha f)^{2}+(-1)^{i}(\beta
g)^{2}\right)  ^{2}\left(  -(-1)^{i}f^{\prime2}+g^{\prime2}\right)  ^{2}},
\label{s11}%
\end{equation}
respectively. Here, $H_{j}^{\mathcal{S}_{i}}$ are the $j$-th mean curvatures
of $\mathcal{S}_{i}$. For more details about construction of the timelike
general rotational surfaces with non-null plane meridian curves in
$\mathbb{E}_{1}^{4}$, we refer to \cite{Bencheva}.

Also we must note that, from now on, we will assume that $i,j\in\{1,2\}$ and
we will state $\varepsilon=(-1)^{i}$ throughout this study.

\section{\textbf{TIMELIKE\ GENERAL ROTATIONAL SURFACES IN }$\mathbb{E}_{1}%
^{4}$\textbf{ WITH DENSITY }$e^{\lambda_{1}x^{2}+\lambda_{2}y^{2}+\lambda
_{3}z^{2}+\lambda_{4}t^{2}}$}

In this section, we obtain the weighted mean and weighted Gaussian curvatures
of the timelike general rotational surfaces $\mathcal{S}_{i}$ parametrized by
(\ref{s1}) and (\ref{s5}) in $\mathbb{E}_{1}^{4}$ with density $e^{\lambda
_{1}x^{2}+\lambda_{2}y^{2}+\lambda_{3}z^{2}+\lambda_{4}t^{2}},$ where
$\lambda_{i}$ ($i=1,2,3,4$) are not all zero. Also, we give some results about
weighted minimal and weighted flat timelike general rotational surfaces in
$\mathbb{E}_{1}^{4}$ with density.

Firstly, we define the weighted mean curvature vector field of a surface
$\mathcal{S}$ in $\mathbb{E}_{1}^{4}$ as%
\begin{equation}
\overrightarrow{H_{\phi}^{\mathcal{S}}}=(H_{1}^{\mathcal{S}})_{\phi}%
n_{1}^{\mathcal{S}}+(H_{2}^{\mathcal{S}})_{\phi}n_{2}^{\mathcal{S}},
\label{s12}%
\end{equation}
where $H_{j}^{\mathcal{S}}$ are the $j$-th mean curvatures of $\mathcal{S}$;
$n_{j}^{\mathcal{S}}$ are normal vectors of $\mathcal{S}$ and
\begin{equation}
(H_{j}^{\mathcal{S}})_{\phi}=H_{j}^{\mathcal{S}}-\frac{1}{2}\frac{d\phi
}{dn_{j}^{\mathcal{S}}},\text{ }j\in\{1,2\} \label{s13}%
\end{equation}
are the $j$-th weighted mean curvature functions of $\mathcal{S}$. Also, the
weighted mean curvature function of $\mathcal{S}$ is $H_{\phi}^{\mathcal{S}%
}=\left\Vert \overrightarrow{H_{\phi}^{\mathcal{S}}}\right\Vert .$

Thus from (\ref{s4}), (\ref{s8}) and (\ref{s13}), we obtain the first and
second weighted mean curvature functions of the timelike general rotational
surfaces $\mathcal{S}_{i}$ parametrized by (\ref{s1}) and (\ref{s5}) in
$\mathbb{E}_{1}^{4}$ with density $e^{\lambda_{1}x^{2}+\lambda_{2}%
y^{2}+\lambda_{3}z^{2}+\lambda_{4}t^{2}}$ as follows:%

\begin{equation}
\left(  H_{j}^{\mathcal{S}_{i}}\right)  _{\phi}=\left\{
\begin{array}
[c]{l}%
-\frac{fg\left(  \varepsilon\beta(\lambda_{1}-\lambda_{2})\sin(2\alpha
v)+\alpha(\lambda_{3}+\lambda_{4})\sinh(2\beta v)\right)  }{2\sqrt
{\varepsilon(\alpha f)^{2}+(\beta g)^{2}}},\text{\ if }i\neq j\\
\\
\frac{\left(
\begin{array}
[c]{l}%
\left(
\begin{array}
[c]{l}%
\left(  \varepsilon(\lambda_{4}-\lambda_{3})+(\lambda_{3}+\lambda_{4}%
)\cosh(2\beta v)\right)  \left(  f^{\prime2}-\varepsilon g^{\prime2}\right)
{\small gf}^{\prime}\\
{\small +\varepsilon}\left(  \lambda_{1}+\lambda_{2}+(\lambda_{1}-\lambda
_{2})\cos(2\alpha v)\right)  \left(  f^{\prime2}-\varepsilon g^{\prime
2}\right)  {\small fg}^{\prime}\\
{\small -g}^{\prime}{\small f}^{\prime\prime}{\small +f}^{\prime}%
{\small g}^{\prime\prime}%
\end{array}
\right)  \left(  (\alpha f)^{2}+\varepsilon(\beta g)^{2}\right) \\
{\small +}\left(  \alpha^{2}fg^{\prime}+\beta^{2}gf^{\prime}\right)  \left(
f^{\prime2}-\varepsilon g^{\prime2}\right)
\end{array}
\right)  }{2\left(  (\alpha f)^{2}+\varepsilon(\beta g)^{2}\right)  \left(
-\varepsilon f^{\prime2}+g^{\prime2}\right)  ^{3/2}}{\small ,\text{\ if }i=j}%
\end{array}
\right.  \label{s14}%
\end{equation}

Hence, using (\ref{s4}), (\ref{s8}) and (\ref{s14}) in (\ref{s12}), from
$H_{\phi}^{\mathcal{S}}=\left\Vert \overrightarrow{H_{\phi}^{\mathcal{S}}%
}\right\Vert $ we get the following proposition:

\begin{proposition}
The weighted mean curvatures of the timelike general rotational surfaces
\textit{of first and second types }$\mathcal{S}_{i}$ parametrized by
(\ref{s1}) and (\ref{s5}) in $\mathbb{E}_{1}^{4}$ with density $e^{\lambda
_{1}x^{2}+\lambda_{2}y^{2}+\lambda_{3}z^{2}+\lambda_{4}t^{2}}$ are%
\begin{equation}
H_{\phi}^{\mathcal{S}_{i}}={\small \sqrt{\frac{\left(
\begin{array}
[c]{l}%
\left(
\begin{array}
[c]{l}%
\left(  (\alpha f)^{2}+\varepsilon(\beta g)^{2}\right)  \left(  \varepsilon
(\lambda_{1}+\lambda_{2}+(\lambda_{1}-\lambda_{2})\cos(2\alpha v))fg^{\prime
}\left(  f^{\prime2}-\varepsilon g^{\prime2}\right)  -g^{\prime}%
f^{\prime\prime}+f^{\prime}g^{\prime\prime}\right) \\
+\left(  \left(  \varepsilon(\lambda_{4}-\lambda_{3})+(\lambda_{3}+\lambda
_{4})\cosh(2\beta v)\right)  \left(  (\alpha f)^{2}+\varepsilon(\beta
g)^{2}\right)  f^{\prime}g+\alpha^{2}fg^{\prime}+\beta^{2}gf^{\prime}\right)
\left(  f^{\prime2}-\varepsilon g^{\prime2}\right)
\end{array}
\right)  ^{2}\\
+f^{2}g^{2}\left(  \varepsilon(\alpha f)^{2}+(\beta g)^{2}\right)  \left(
\varepsilon\beta(\lambda_{1}-\lambda_{2})\sin(2\alpha v)+\alpha(\lambda
_{3}+\lambda_{4})\sinh(2\beta v)\right)  ^{2}\left(  -\varepsilon f^{\prime
2}+g^{\prime2}\right)  ^{3}%
\end{array}
\right)  }{4\left(  (\alpha f)^{2}+\varepsilon(\beta g)^{2}\right)
^{2}\left(  -\varepsilon f^{\prime2}+g^{\prime2}\right)  ^{3}}}.} \label{s15}%
\end{equation}

\end{proposition}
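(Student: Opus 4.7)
The strategy is to unpack $H_\phi^{\mathcal{S}_i}=\|\overrightarrow{H_\phi^{\mathcal{S}_i}}\|$ using the decomposition (\ref{s12}), express the two weighted mean curvature functions through (\ref{s14}), and then combine the squared terms over a common denominator. The proof therefore splits naturally into three steps: a causal/orthogonality check on the normal frame, a direct substitution from (\ref{s14}), and a bookkeeping step in which the two squared branches are merged.

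First I would verify that $\{n_1^{\mathcal{S}_i},n_2^{\mathcal{S}_i}\}$ is an orthonormal frame of \emph{unit spacelike} vectors with respect to the Minkowski metric of signature $(+,+,+,-)$. A direct computation of $\langle n_k^{\mathcal{S}_i},n_\ell^{\mathcal{S}_i}\rangle$ from the explicit formulas (\ref{s4}) and (\ref{s8}) reduces, via the identities $\cos^2+\sin^2=1$ and $\cosh^2-\sinh^2=1$ together with the sign hypotheses (\ref{s2}) and (\ref{s6}), to $\langle n_k^{\mathcal{S}_i},n_k^{\mathcal{S}_i}\rangle = 1$ and $\langle n_1^{\mathcal{S}_i},n_2^{\mathcal{S}_i}\rangle=0$. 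Consequently,
\begin{equation*}
H_\phi^{\mathcal{S}_i} \;=\; \sqrt{\bigl((H_1^{\mathcal{S}_i})_\phi\bigr)^2 + \bigl((H_2^{\mathcal{S}_i})_\phi\bigr)^2}.
\end{equation*}

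Second, I would substitute (\ref{s14}). For each fixed $i$, exactly one of $j=1,2$ falls into the $i=j$ branch, whose denominator is $2\bigl((\alpha f)^2+\varepsilon(\beta g)^2\bigr)(-\varepsilon f'^2+g'^2)^{3/2}$, and the other falls into the $i\neq j$ branch, whose denominator is $2\sqrt{\varepsilon(\alpha f)^2+(\beta g)^2}$. After squaring, the $i=j$ branch already lives over the target denominator $4\bigl((\alpha f)^2+\varepsilon(\beta g)^2\bigr)^2(-\varepsilon f'^2+g'^2)^3$ appearing in (\ref{s15}); expanding the nested brackets of its numerator and regrouping by the factors $(\alpha f)^2+\varepsilon(\beta g)^2$ and $f'^2-\varepsilon g'^2$ recovers the first summand displayed in (\ref{s15}). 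To lift the squared $i\neq j$ branch to the same common denominator I would use the identity
\begin{equation*}
\varepsilon(\alpha f)^2+(\beta g)^2 \;=\; \varepsilon\bigl((\alpha f)^2+\varepsilon(\beta g)^2\bigr),
\end{equation*}
valid since $\varepsilon^2=1$, and multiply numerator and denominator by $\varepsilon\bigl((\alpha f)^2+\varepsilon(\beta g)^2\bigr)(-\varepsilon f'^2+g'^2)^3$. The factor $\varepsilon^2$ collapses and yields exactly the second numerator summand $f^2g^2\bigl(\varepsilon(\alpha f)^2+(\beta g)^2\bigr)\bigl(\varepsilon\beta(\lambda_1-\lambda_2)\sin(2\alpha v)+\alpha(\lambda_3+\lambda_4)\sinh(2\beta v)\bigr)^2(-\varepsilon f'^2+g'^2)^3$ in (\ref{s15}). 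Taking the positive square root then gives the claimed formula.

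The main obstacle is bookkeeping rather than ideas: one has to keep careful track of the signs introduced by $\varepsilon=(-1)^i$ in three places at once, namely the causal verification (both normals must end up spacelike, not timelike), the distinction between the expressions $(\alpha f)^2+\varepsilon(\beta g)^2$ and $\varepsilon(\alpha f)^2+(\beta g)^2$, and the rearrangement of the long $i=j$ numerator of (\ref{s14}) into the form shown in (\ref{s15}). Once these sign conventions are aligned, the remainder is a routine square-and-add computation driven solely by the relation $\varepsilon^2=1$.
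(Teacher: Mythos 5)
Your proposal is correct and follows essentially the same route as the paper, which simply substitutes the two branches of (\ref{s14}) into (\ref{s12}) and computes $H_{\phi}^{\mathcal{S}_i}=\Vert\overrightarrow{H_{\phi}^{\mathcal{S}_i}}\Vert$; your explicit check that both normals in (\ref{s4}) and (\ref{s8}) are unit spacelike and mutually orthogonal (so that the norm is a Euclidean sum of squares), and the use of $\varepsilon(\alpha f)^{2}+(\beta g)^{2}=\varepsilon\bigl((\alpha f)^{2}+\varepsilon(\beta g)^{2}\bigr)$ to reach the common denominator, are exactly the steps the paper leaves implicit.
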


\subsection{Weighted Minimal Timelike General Rotational surfaces in
$\mathbb{E}_{1}^{4}$ with Density $e^{\lambda(x^{2}+y^{2})+\mu(z^{2}-t^{2})}$}

\ 

Let we take $\lambda_{1}=\lambda_{2}=\lambda$ and $\lambda_{3}=-\lambda
_{4}=\mu$ in (\ref{s15}), i.e. let the density be $e^{\lambda(x^{2}+y^{2}%
)+\mu(z^{2}-t^{2})},$ where $\lambda$ and $\mu$ are not all zero. Then the
weighted mean curvatures of the timelike general rotational surfaces of first
and second types $\mathcal{S}_{i}$ parametrized by (\ref{s1}) and (\ref{s5})
in $\mathbb{E}_{1}^{4}$ with density $e^{\lambda(x^{2}+y^{2})+\mu(z^{2}%
-t^{2})}$ are obtained as%
\begin{equation}
H_{\phi}^{\mathcal{S}_{i}}=\frac{\left\vert
\begin{array}
[c]{l}%
\left(  \left(  \alpha^{2}fg^{\prime}+\beta^{2}gf^{\prime}\right)
-2\varepsilon\mu\left(  (\alpha f)^{2}+\varepsilon(\beta g)^{2}\right)
f^{\prime}g\right)  \left(  f^{\prime2}-\varepsilon g^{\prime2}\right) \\
+\left(  (\alpha f)^{2}+\varepsilon(\beta g)^{2}\right)  \left(
2\varepsilon\lambda\left(  f^{\prime2}-\varepsilon g^{\prime2}\right)
fg^{\prime}-g^{\prime}f^{\prime\prime}+f^{\prime}g^{\prime\prime}\right)
\end{array}
\right\vert }{2\left(  \varepsilon(\alpha f)^{2}+(\beta g)^{2}\right)  \left(
-\varepsilon f^{\prime2}+g^{\prime2}\right)  ^{3/2}}. \label{ss16}%
\end{equation}
Thus, we have

\begin{theorem}
The timelike general rotational surfaces of first and second types
$\mathcal{S}_{i}$ parametrized by (\ref{s1}) and (\ref{s5}) in $\mathbb{E}%
_{1}^{4}$ with density $e^{\lambda(x^{2}+y^{2})+\mu(z^{2}-t^{2})}$ are
weighted minimal if and only if the equation%
\begin{align}
&  \left(  \left(  -2\varepsilon\mu\alpha^{2}f^{2}+\beta^{2}\right)
f^{\prime}g+\alpha^{2}fg^{\prime}-2\beta^{2}\mu g^{3}f^{\prime}\right)
\left(  f^{\prime2}-\varepsilon g^{\prime2}\right) \nonumber\\
&  +\left(  (\alpha f)^{2}+\varepsilon(\beta g)^{2}\right)  \left(  \left(
-2\lambda f\left(  -\varepsilon f^{\prime2}+g^{\prime2}\right)  -f^{\prime
\prime}\right)  g^{\prime}+f^{\prime}g^{\prime\prime}\right)  =0 \label{s17}%
\end{align}
holds.
\end{theorem}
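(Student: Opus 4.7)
The plan is to derive the weighted-minimality condition directly from the formula (\ref{ss16}) already obtained for $H_\phi^{\mathcal{S}_i}$. Since weighted minimality means $H_\phi^{\mathcal{S}_i}=0$, and since $H_\phi^{\mathcal{S}_i}$ is expressed as the absolute value of a quotient, the equation $H_\phi^{\mathcal{S}_i}=0$ is equivalent to the vanishing of the expression inside the absolute-value bars in the numerator, provided the denominator is nonzero. So the strategy reduces to two verifications: (i) the denominator does not vanish, and (ii) the vanishing of the numerator is literally the equation (\ref{s17}).

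For (i), the denominator $2(\varepsilon(\alpha f)^{2}+(\beta g)^{2})(-\varepsilon f'^{2}+g'^{2})^{3/2}$ is strictly positive: when $i=1$ (so $\varepsilon=-1$) the inequalities (\ref{s2}) give $-(\alpha f)^{2}+(\beta g)^{2}>0$ and $f'^{2}+g'^{2}>0$, while when $i=2$ (so $\varepsilon=+1$) the inequalities (\ref{s6}) give $(\alpha f)^{2}+(\beta g)^{2}>0$ and $-f'^{2}+g'^{2}>0$. Hence dividing through is legitimate, and $H_\phi^{\mathcal{S}_i}=0$ if and only if the expression between the absolute value bars equals zero.

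For (ii), I would perform the short algebraic reduction of the numerator of (\ref{ss16}), distributing the factor $(\alpha f)^{2}+\varepsilon(\beta g)^{2}$ against the $-2\varepsilon\mu$-term and using $\varepsilon^{2}=1$. The first summand
$\bigl((\alpha^{2}fg'+\beta^{2}gf')-2\varepsilon\mu((\alpha f)^{2}+\varepsilon(\beta g)^{2})f'g\bigr)(f'^{2}-\varepsilon g'^{2})$
expands to $((-2\varepsilon\mu\alpha^{2}f^{2}+\beta^{2})f'g+\alpha^{2}fg'-2\mu\beta^{2}g^{3}f')(f'^{2}-\varepsilon g'^{2})$, matching the first line of (\ref{s17}). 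For the second summand, I would use the identity $\varepsilon(f'^{2}-\varepsilon g'^{2})=-(-\varepsilon f'^{2}+g'^{2})$ to rewrite $2\varepsilon\lambda(f'^{2}-\varepsilon g'^{2})fg'=-2\lambda f(-\varepsilon f'^{2}+g'^{2})g'$, giving precisely the second line of (\ref{s17}). Combining both summands reproduces the left-hand side of (\ref{s17}).

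There is really no obstacle here: the proposition (\ref{s15}) already did the heavy lifting, and the specialization $\lambda_{1}=\lambda_{2}=\lambda$, $\lambda_{3}=-\lambda_{4}=\mu$ is what makes the off-diagonal terms $\varepsilon\beta(\lambda_{1}-\lambda_{2})\sin(2\alpha v)+\alpha(\lambda_{3}+\lambda_{4})\sinh(2\beta v)$ vanish identically, collapsing the formula (\ref{s15}) to the clean form (\ref{ss16}). Everything that remains is bookkeeping, so the only point worth verifying carefully is the $\varepsilon$-sign tracking in the $\lambda$- and $\mu$-terms when the bracket $(\alpha f)^{2}+\varepsilon(\beta g)^{2}$ is distributed; this is the step most prone to an arithmetic slip.
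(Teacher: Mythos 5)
Your proposal is correct and follows exactly the route the paper takes: the theorem is stated immediately after formula (\ref{ss16}) as a direct consequence of setting its numerator to zero, and your sign bookkeeping (using $\varepsilon^{2}=1$ and $\varepsilon(f^{\prime2}-\varepsilon g^{\prime2})=-(-\varepsilon f^{\prime2}+g^{\prime2})$) together with the positivity of the denominator via (\ref{s2}) and (\ref{s6}) checks out. Your explicit verification that the denominator is nonzero is in fact slightly more careful than the paper, which omits this point.
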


Now, let we construct two examples for the weighted minimal timelike general
rotational surfaces of first and second types $\mathcal{S}_{i}.$

\begin{example}
If we take $f(u)=\sin u,$ $g(u)=\cos u$ and $\alpha=\beta=1$ in (\ref{s1}),
then we get the following timelike general rotational surface of first type:%
\begin{equation}
\mathcal{S}_{1}:X(u,v)=(\sin u\cos v,\sin u\sin v,\cos u\cosh v,\cos u\sinh
v), \label{ex1}%
\end{equation}
where we suppose $u\in\left(  -\frac{\pi}{4},\frac{\pi}{4}\right)  .$ The
weighted mean curvature of this surface in $\mathbb{E}_{1}^{4}$ with density
$e^{\lambda_{1}x^{2}+\lambda_{2}y^{2}+\lambda_{3}z^{2}+\lambda_{4}t^{2}}$ is%
\begin{equation}
H_{\phi}^{\mathcal{S}_{1}}=\frac{1}{2}\sqrt{\left(
\begin{array}
[c]{l}%
\left(
\begin{array}
[c]{l}%
(\lambda_{1}+\lambda_{2}+(\lambda_{1}-\lambda_{2})\cos(2v))\sin^{2}u-2\\
+(\lambda_{3}-\lambda_{4}+\left(  \lambda_{3}+\lambda_{4}\right)
\cosh(2v))\cos^{2}u
\end{array}
\right)  ^{2}\cos^{2}(2u)\\
+\frac{1}{8}\sin(2u)\sin(4u)\left(  \left(  \lambda_{1}-\lambda_{2}\right)
\sin(2v)-\left(  \lambda_{3}+\lambda_{4}\right)  \sinh(2v)\right)  ^{2}%
\end{array}
\right)  \sec^{2}(2u)} \label{ex2}%
\end{equation}
Taking $\lambda_{1}=\lambda_{2}=\lambda$ and $\lambda_{3}=-\lambda_{4}=\mu,$
we obtain the weighted mean curvature of the surface (\ref{ex1}) in
$\mathbb{E}_{1}^{4}$ with density $e^{\lambda(x^{2}+y^{2})+\mu(z^{2}-t^{2})}$
as%
\begin{equation}
H_{\phi}^{\mathcal{S}_{1}}=\left\vert -1+\lambda\sin^{2}u+\mu\cos
^{2}u\right\vert . \label{ex3}%
\end{equation}
Thus, the timelike general rotational surface of first type $\mathcal{S}_{1}$
parametrized by (\ref{ex1}) is weighted minimal in $\mathbb{E}_{1}^{4}$ with
density $e^{x^{2}+y^{2}+z^{2}-t^{2}}.$ In Figure 1(a), one can see the
projection of the surface (\ref{ex1}) into $x_{1}x_{2}x_{3}$-space.
\end{example}

\begin{example}
If we take $f(u)=\cosh u,$ $g(u)=\sinh u$ and $\alpha=3,$ $\beta=2$ in
(\ref{s5}), then we get the following timelike general rotational surface of
second type:%
\begin{equation}
\mathcal{S}_{2}:X(u,v)=(\cosh u\cos(3v),\cosh u\sin(3v),\sinh u\sinh(2v),\sinh
u\cosh(2v)). \label{ex4}%
\end{equation}
The weighted mean curvature of this surface in $\mathbb{E}_{1}^{4}$ with
density $e^{\lambda_{1}x^{2}+\lambda_{2}y^{2}+\lambda_{3}z^{2}+\lambda
_{4}t^{2}}$ is%
\begin{equation}
H_{\phi}^{\mathcal{S}_{2}}=\frac{1}{2}\sqrt{%
\begin{array}
[c]{l}%
\left(  2+\left(  \lambda_{1}+\lambda_{2}+\left(  \lambda_{1}-\lambda
_{2}\right)  \cos(6v)\right)  \cosh^{2}u+\left(  -\lambda_{3}+\lambda
_{4}+\left(  \lambda_{3}+\lambda_{4}\right)  \cosh(4v)\sinh^{2}u\right)
\right)  ^{2}\\
+\frac{\left(  2\left(  \lambda_{1}-\lambda_{2})\sin(6v)\right)  +3\left(
\lambda_{3}+\lambda_{4}\right)  \sinh(4v)\right)  ^{2}\cosh^{2}u\sinh^{2}%
u}{9\cosh^{2}u+4\sinh^{2}u}%
\end{array}
} \label{ex5}%
\end{equation}
Taking $\lambda_{1}=\lambda_{2}=\lambda$ and $\lambda_{3}=-\lambda_{4}=\mu,$
we obtain the weighted mean curvature of the surface (\ref{ex4}) in
$\mathbb{E}_{1}^{4}$ with density $e^{\lambda(x^{2}+y^{2})+\mu(z^{2}-t^{2})}$
as%
\begin{equation}
H_{\phi}^{\mathcal{S}_{2}}=\left\vert 1+\lambda\cosh^{2}u-\mu\sinh
^{2}u\right\vert . \label{ex6}%
\end{equation}
Thus, the timelike general rotational surface of second type $\mathcal{S}_{2}$
parametrized by (\ref{ex4}) is weighted minimal in $\mathbb{E}_{1}^{4}$ with
density $e^{-x^{2}-y^{2}-z^{2}+t^{2}}.$ In Figure 1(b), one can see the
projection of the surface (\ref{ex4}) into $x_{1}x_{2}x_{3}$-space.
\end{example}

\begin{figure}[H]
\centering
\includegraphics[
	height=3.1in, width=5.4in
	]	{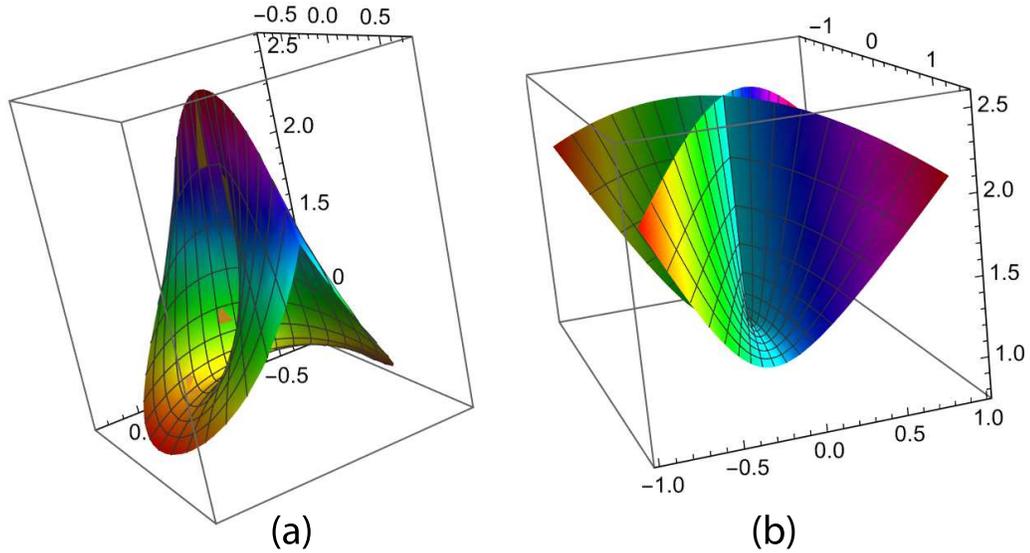}\caption{The projection of the weighted minimal timelike
general rotational surface of first type (\ref{ex1}) into $x_{1}x_{2}x_{3}%
$-space \textbf{(a)} and the projection of the weighted minimal timelike
general rotational surface of second type (\ref{ex4}) into $x_{1}x_{2}x_{3}%
$-space \textbf{(b)}}%
\label{fig:1}%
\end{figure}

Now, let us suppose that $f(u)=k,$ where $k$ is a nonzero constant. In this
case, from (\ref{ss16}), the weighted mean curvature is%
\begin{equation}
H_{\phi}^{\mathcal{S}_{i}}=\varepsilon\frac{\left\vert k\left(  \alpha
^{2}\left(  1+2\varepsilon k^{2}\lambda\right)  +2\beta^{2}\lambda
g^{2}\right)  \right\vert }{2\left(  \alpha^{2}k^{2}+\varepsilon(\beta
g)^{2}\right)  }. \label{s18}%
\end{equation}
If we find the function $g(u)$ from (\ref{s18}) according to the weighted mean
curvature $\left(  H_{\phi}\right)  ^{\mathcal{S}_{i}}$, then we get%
\begin{equation}
g(u)=\left\{
\begin{array}
[c]{l}%
\mp\frac{\alpha}{\beta}\frac{\sqrt{k\left(  \varepsilon-2H_{\phi}%
^{\mathcal{S}_{i}}k+2k^{2}\lambda\right)  }}{\sqrt{2\varepsilon\left(
H_{\phi}^{\mathcal{S}_{i}}-k\lambda\right)  }},\text{\ if }k\left(  \alpha
^{2}\left(  1+2\varepsilon k^{2}\lambda\right)  +2\beta^{2}\lambda
g^{2}\right)  >0\\
\\
\mp\frac{\alpha}{\beta}\frac{\sqrt{k\left(  \varepsilon+2H_{\phi}%
^{\mathcal{S}_{i}}k+2k^{2}\lambda\right)  }}{\sqrt{-2\varepsilon\left(
H_{\phi}^{\mathcal{S}_{i}}+k\lambda\right)  }},\text{\ if }k\left(  \alpha
^{2}\left(  1+2\varepsilon k^{2}\lambda\right)  +2\beta^{2}\lambda
g^{2}\right)  <0.
\end{array}
\right.  \label{s19}%
\end{equation}
Thus, we can state

\begin{theorem}
The timelike general rotational surfaces of first and second types
$\mathcal{S}_{i}$ in $\mathbb{E}_{1}^{4}$ with density $e^{\lambda(x^{2}%
+y^{2})+\mu(z^{2}-t^{2})}$ can be parametrized in terms of the weighted mean
curvature by the following:

For $k\left(  \alpha^{2}\left(  1+2\varepsilon k^{2}\lambda\right)
+2\beta^{2}\lambda g^{2}\right)  >0$%
\begin{align}
\mathcal{S}_{1}  &  :X_{1}(u,v)=(k\cos(\alpha v),k\sin(\alpha v),\mp
\frac{\alpha}{\beta}\frac{\sqrt{k\left(  -1-2H_{\phi}^{\mathcal{S}_{1}%
}k+2k^{2}\lambda\right)  }}{\sqrt{-2\left(  H_{\phi}^{\mathcal{S}_{1}%
}-k\lambda\right)  }}\cosh(\beta v),\nonumber\\
&  \text{
\ \ \ \ \ \ \ \ \ \ \ \ \ \ \ \ \ \ \ \ \ \ \ \ \ \ \ \ \ \ \ \ \ \ \ \ \ \ \ \ }%
\mp\frac{\alpha}{\beta}\frac{\sqrt{k\left(  -1-2H_{\phi}^{\mathcal{S}_{1}%
}k+2k^{2}\lambda\right)  }}{\sqrt{-2\left(  H_{\phi}^{\mathcal{S}_{1}%
}-k\lambda\right)  }}\sinh(\beta v)),\label{s21}\\
\mathcal{S}_{2}  &  :X_{2}(u,v)=(k\cos(\alpha v),k\sin(\alpha v),\mp
\frac{\alpha}{\beta}\frac{\sqrt{k\left(  1-2H_{\phi}^{\mathcal{S}_{2}}%
k+2k^{2}\lambda\right)  }}{\sqrt{2\left(  H_{\phi}^{\mathcal{S}_{2}}%
-k\lambda\right)  }}\sinh(\beta v),\nonumber\\
&  \text{
\ \ \ \ \ \ \ \ \ \ \ \ \ \ \ \ \ \ \ \ \ \ \ \ \ \ \ \ \ \ \ \ \ \ \ \ \ \ \ \ }%
\mp\frac{\alpha}{\beta}\frac{\sqrt{k\left(  1-2H_{\phi}^{\mathcal{S}_{2}%
}k+2k^{2}\lambda\right)  }}{\sqrt{2\left(  H_{\phi}^{\mathcal{S}_{2}}%
-k\lambda\right)  }}\cosh(\beta v)). \label{s22}%
\end{align}
For $k\left(  \alpha^{2}\left(  1+2\varepsilon k^{2}\lambda\right)
+2\beta^{2}\lambda g^{2}\right)  <0$%
\begin{align}
\mathcal{S}_{1}  &  :X_{1}(u,v)=(k\cos(\alpha v),k\sin(\alpha v),\mp
\frac{\alpha}{\beta}\frac{\sqrt{k\left(  -1+2H_{\phi}^{\mathcal{S}_{1}%
}k+2k^{2}\lambda\right)  }}{\sqrt{2\left(  H_{\phi}^{\mathcal{S}_{1}}%
+k\lambda\right)  }}\cosh(\beta v),\nonumber\\
&  \text{
\ \ \ \ \ \ \ \ \ \ \ \ \ \ \ \ \ \ \ \ \ \ \ \ \ \ \ \ \ \ \ \ \ \ \ \ \ \ \ \ }%
\mp\frac{\alpha}{\beta}\frac{\sqrt{k\left(  -1+2H_{\phi}^{\mathcal{S}_{1}%
}k+2k^{2}\lambda\right)  }}{\sqrt{2\left(  H_{\phi}^{\mathcal{S}_{1}}%
+k\lambda\right)  }}\sinh(\beta v)),\label{s23}\\
\mathcal{S}_{2}  &  :X_{2}(u,v)=(k\cos(\alpha v),k\sin(\alpha v),\mp
\frac{\alpha}{\beta}\frac{\sqrt{k\left(  1+2H_{\phi}^{\mathcal{S}_{2}}%
k+2k^{2}\lambda\right)  }}{\sqrt{-2\left(  H_{\phi}^{\mathcal{S}_{2}}%
+k\lambda\right)  }}\sinh(\beta v),\nonumber\\
&  \text{
\ \ \ \ \ \ \ \ \ \ \ \ \ \ \ \ \ \ \ \ \ \ \ \ \ \ \ \ \ \ \ \ \ \ \ \ \ \ \ \ }%
\mp\frac{\alpha}{\beta}\frac{\sqrt{k\left(  1+2H_{\phi}^{\mathcal{S}_{2}%
}k+2k^{2}\lambda\right)  }}{\sqrt{-2\left(  H_{\phi}^{\mathcal{S}_{2}%
}+k\lambda\right)  }}\cosh(\beta v)). \label{s24}%
\end{align}
Here $\alpha,$ $\beta,$ $k$ and $\lambda$ are nonzero constants ($\alpha>0,$
$\beta>0$).
\end{theorem}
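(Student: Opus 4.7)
The plan is to take the scalar relation (\ref{s18}), already derived from (\ref{ss16}) by setting $f(u)=k$, and invert it for $g(u)$ in terms of the prescribed weighted mean curvature $H_{\phi}^{\mathcal{S}_{i}}$. The resulting closed-form expressions for $g(u)$ are then substituted back into (\ref{s1}) and (\ref{s5}) together with $f(u)=k$ in order to produce the four parametrizations (\ref{s21})--(\ref{s24}).

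Concretely, I would regard (\ref{s18}) as an algebraic equation in $g^{2}$ of the form $2B\,H_{\phi}^{\mathcal{S}_{i}}=|kA|$, where $A:=\alpha^{2}(1+2\varepsilon k^{2}\lambda)+2\beta^{2}\lambda g^{2}$ and $B:=\alpha^{2}k^{2}+\varepsilon(\beta g)^{2}$. The absolute value forces a case split on the sign of $kA$. If $kA>0$, then $2B H_{\phi}^{\mathcal{S}_{i}}=\varepsilon kA$; expanding and collecting the $g^{2}$-terms gives
\[
g^{2}=\frac{\alpha^{2}\,k\bigl(\varepsilon-2H_{\phi}^{\mathcal{S}_{i}}k+2k^{2}\lambda\bigr)}{2\varepsilon\beta^{2}\bigl(H_{\phi}^{\mathcal{S}_{i}}-k\lambda\bigr)},
\]
and taking the square root recovers the first branch of (\ref{s19}); the case $kA<0$ is handled symmetrically, producing the second branch after reversal of the two sign choices inside the radicals. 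The $\mp$ in front of each expression records the free choice of orientation of the meridian curve.

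With $g(u)$ in hand, the substitution step is mechanical. For the first-type surface $\mathcal{S}_{1}$ (where $\varepsilon=-1$), one inserts $f=k$ and either branch of $g$ into (\ref{s1}) to obtain (\ref{s21}) and (\ref{s23}); for the second-type surface $\mathcal{S}_{2}$ (where $\varepsilon=+1$), the analogous substitution into (\ref{s5}) yields (\ref{s22}) and (\ref{s24}). Along the way, one should check that the nondegeneracy conditions (\ref{s2}) or (\ref{s6}) remain satisfied: with $f'\equiv0$ they reduce to $g'\neq0$, which holds automatically except where the expression under the square root degenerates.

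The main obstacle I foresee is purely clerical: the combined effect of $\varepsilon=\pm1$, the $\mp$ ambiguity of the square root, and the absolute value in (\ref{s18}) produces several sign assignments that must be tracked consistently. I would handle this by keeping $\varepsilon$ symbolic throughout the inversion, deriving a single unified formula for $g(u)$, and only specializing to $\varepsilon=-1$ and $\varepsilon=+1$ at the very end, so that (\ref{s21})--(\ref{s24}) emerge in parallel from one computation rather than having to be reverified case by case.
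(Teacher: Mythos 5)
Your proposal is correct and follows essentially the same route as the paper: the authors likewise set $f(u)=k$ in (\ref{ss16}) to obtain (\ref{s18}), split on the sign of $k\left(\alpha^{2}\left(1+2\varepsilon k^{2}\lambda\right)+2\beta^{2}\lambda g^{2}\right)$ to resolve the absolute value, solve for $g$ to get (\ref{s19}), and substitute back into (\ref{s1}) and (\ref{s5}) with $\varepsilon=-1$ and $\varepsilon=+1$. Your algebra for $g^{2}$ matches the paper's (\ref{s19}) exactly, so no further comment is needed.
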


If we assume that $H_{\phi}^{\mathcal{S}_{i}}$ is constant in (\ref{s19}), we
have $g$ is a constant, too. Hence,

\begin{corollary}
The weighted mean curvatures of the timelike general rotational surfaces of
first and second types $\mathcal{S}_{i}$ in $\mathbb{E}_{1}^{4}$ with density
$e^{\lambda(x^{2}+y^{2})+\mu(z^{2}-t^{2})}$ cannot be constant when $f(u)$ is
a nonzero constant.
\end{corollary}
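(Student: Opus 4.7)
The plan is to read off the corollary directly from the parametrization formula (\ref{s19}) together with the regularity inequalities imposed in Section 2. Suppose, for contradiction, that $f(u)=k$ is a nonzero constant and that $H_{\phi}^{\mathcal{S}_{i}}$ is also constant. Since the author already derived (\ref{s18}) in this setting and then solved it algebraically for $g$, the formula (\ref{s19}) expresses $g(u)$ as an explicit combination of the constants $\alpha,\beta,k,\lambda,\varepsilon$ and $H_{\phi}^{\mathcal{S}_{i}}$. Once $H_{\phi}^{\mathcal{S}_{i}}$ is a constant as well, the right-hand side of (\ref{s19}) contains no dependence on $u$, so $g$ is forced to be constant on its domain.

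Next I would derive a contradiction from the defining regularity conditions. For a surface of the first type one needs (\ref{s2}), in particular $f'^{2}(u)+g'^{2}(u)>0$; but $f\equiv k$ together with $g\equiv \mathrm{const}$ yields $f'\equiv g'\equiv 0$, contradicting this strict inequality. For a surface of the second type one needs (\ref{s6}), in particular $f'^{2}(u)-g'^{2}(u)<0$; again $f'\equiv g'\equiv 0$ gives $0<0$, a contradiction. Either way, the parametrization degenerates and is no longer a bona fide timelike general rotational surface.

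A small subtlety worth addressing explicitly in the writeup is the branching in (\ref{s19}) according to the sign of $k\bigl(\alpha^{2}(1+2\varepsilon k^{2}\lambda)+2\beta^{2}\lambda g^{2}\bigr)$. I would note that the argument above does not depend on which branch one lies in: in both cases the right-hand side is a constant in $u$, so $g$ is constant regardless. I would likewise remark that one does not even need the square-root expressions to be real; if they are not, then no such $g$ exists and the conclusion holds trivially.

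There is essentially no hard step; the proof is just an if-and-only-if reading of the author's own solution formula (\ref{s19}) combined with the regularity hypotheses (\ref{s2}) and (\ref{s6}). The only mild care required is in the bookkeeping of the $\varepsilon=(-1)^{i}$ convention and the two sign branches in (\ref{s19}), to make sure the impossibility is stated uniformly for $i\in\{1,2\}$.
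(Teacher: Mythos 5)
Your argument is correct and takes essentially the same route as the paper: the authors likewise observe that assuming $H_{\phi}^{\mathcal{S}_{i}}$ constant in (\ref{s19}) forces $g$ to be constant and then immediately conclude the corollary, leaving the final degeneracy implicit. Your write-up only adds the explicit last step of checking that $f'\equiv g'\equiv 0$ violates the regularity conditions (\ref{s2}) and (\ref{s6}), which is a welcome clarification but not a different proof.
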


Now, we will suppose that $g(u)=k,$ where $k$ is a nonzero constant for the
timelike general rotational surfaces of first type $\mathcal{S}_{1}$ in
$\mathbb{E}_{1}^{4}$ with density $e^{\lambda(x^{2}+y^{2})+\mu(z^{2}-t^{2})}$.
Here we must note that, from (\ref{s6}), $g(u)$ cannot be constant for the
timelike general rotational surfaces of second type $\mathcal{S}_{2}$ in
$\mathbb{E}_{1}^{4}$.

With similar procedure, if we suppose that $g(u)=l,$ $l$ is a nonzero
constant, then the from (\ref{ss16}), the weighted mean curvature in this case
for $\mathcal{S}_{1}$ is%
\begin{equation}
H_{\phi}^{\mathcal{S}_{1}}=\frac{\left\vert l\left(  2\alpha^{2}\mu
f^{2}+\beta^{2}\left(  1-2l^{2}\mu\right)  \right)  \right\vert }{2\left(
-(\alpha f)^{2}+(\beta l)^{2}\right)  }. \label{s25}%
\end{equation}
If we find the function $f(u)$ from (\ref{s25}) according to the weighted mean
curvature $H_{\phi}^{\mathcal{S}_{1}}$, then we get%
\begin{equation}
f(u)=\left\{
\begin{array}
[c]{l}%
\mp\frac{\beta}{\alpha}\frac{\sqrt{l\left(  -1+2H_{\phi}^{\mathcal{S}_{1}%
}l+2l^{2}\mu\right)  }}{\sqrt{2\left(  H_{\phi}^{\mathcal{S}_{1}}+l\mu\right)
}},\text{\ if }l\left(  \beta^{2}\left(  1-2l^{2}\mu\right)  +2\alpha^{2}\mu
f^{2}\right)  >0\\
\\
\mp\frac{\beta}{\alpha}\frac{\sqrt{l\left(  -1-2H_{\phi}^{\mathcal{S}_{1}%
}l+2l^{2}\mu\right)  }}{\sqrt{2\left(  -H_{\phi}^{\mathcal{S}_{1}}%
+l\mu\right)  }},\text{\ if }l\left(  \beta^{2}\left(  1-2l^{2}\mu\right)
+2\alpha^{2}\mu f^{2}\right)  <0.
\end{array}
\right.  \label{s26}%
\end{equation}
Hence, we have

\begin{theorem}
The timelike general rotational surface of first type $\mathcal{S}_{1}$ in
$\mathbb{E}_{1}^{4}$ with density $e^{\lambda(x^{2}+y^{2})+\mu(z^{2}-t^{2})}$
can be parametrized in terms of the weighted mean curvature by the following:

For $l\left(  \beta^{2}\left(  1-2l^{2}\mu\right)  +2\alpha^{2}\mu
f^{2}\right)  >0$%
\begin{align}
\mathcal{S}_{1}  &  :X_{1}(u,v)=(\mp\frac{\beta}{\alpha}\frac{\sqrt{l\left(
-1+2H_{\phi}^{\mathcal{S}_{1}}l+2l^{2}\mu\right)  }}{\sqrt{2\left(  H_{\phi
}^{\mathcal{S}_{1}}+l\mu\right)  }}\cos(\alpha v),\mp\frac{\beta}{\alpha}%
\frac{\sqrt{l\left(  -1+2H_{\phi}^{\mathcal{S}_{1}}l+2l^{2}\mu\right)  }%
}{\sqrt{2\left(  H_{\phi}^{\mathcal{S}_{1}}+l\mu\right)  }}\sin(\alpha
v),\nonumber\\
&  \text{ \ \ \ \ \ \ \ \ \ \ \ \ \ \ \ \ }l\cosh(\beta v),l\sinh(\beta v))
\label{s28}%
\end{align}
and for $l\left(  \beta^{2}\left(  1-2l^{2}\mu\right)  +2\alpha^{2}\mu
f^{2}\right)  <0$
\begin{align}
\mathcal{S}_{1}  &  :X_{1}(u,v)=(\mp\frac{\beta}{\alpha}\frac{\sqrt{l\left(
-1-2H_{\phi}^{\mathcal{S}_{1}}l+2l^{2}\mu\right)  }}{\sqrt{2\left(  -H_{\phi
}^{\mathcal{S}_{1}}+l\mu\right)  }}\cos(\alpha v),\mp\frac{\beta}{\alpha}%
\frac{\sqrt{l\left(  -1-2H_{\phi}^{\mathcal{S}_{1}}l+2l^{2}\mu\right)  }%
}{\sqrt{2\left(  -H_{\phi}^{\mathcal{S}_{1}}+l\mu\right)  }}\sin(\alpha
v),\nonumber\\
&  \text{ \ \ \ \ \ \ \ \ \ \ \ \ \ \ \ \ }l\cosh(\beta v),l\sinh(\beta v)),
\label{s29}%
\end{align}
where $\alpha,$ $\beta,$ $l$ and $\mu$ are nonzero constants ($\alpha>0,$
$\beta>0$).
\end{theorem}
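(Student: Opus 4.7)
The plan is to derive the two displayed parametrizations by a direct specialization of the weighted mean curvature formula (\ref{ss16}), mirroring the derivation that produced (\ref{s19}) and the preceding theorem for the case $f(u)=k$ constant. I fix the first-type case ($i=1$, hence $\varepsilon=-1$) and assume throughout that $g(u)=l$ for a nonzero constant $l$; as noted just before the statement, the second-type case is excluded by the inequality (\ref{s6}).

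First, I would substitute $g=l$, $g'=0$, $g''=0$ and $\varepsilon=-1$ into (\ref{ss16}). Several simplifications occur at once: the $\lambda$-contribution $2\varepsilon\lambda(f'^2-\varepsilon g'^2)fg'$ inside the absolute value vanishes because of the factor $g'$, so the parameter $\lambda$ drops out entirely; the term $-g'f''+f'g''$ vanishes for the same reason; and $(-\varepsilon f'^2+g'^2)^{3/2}$ collapses to $|f'|^3$. The surviving contribution factors as $l f'^3$ times an expression depending only on $f$, and after canceling against $|f'|^3$ in the denominator under the outer absolute value, this should reproduce (\ref{s25}) exactly. Note that the timelike condition (\ref{s2}) forces $-(\alpha f)^2+(\beta l)^2>0$, so the denominator of (\ref{s25}) is automatically positive and needs no absolute value.

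Next, I would invert (\ref{s25}) algebraically for $f^2(u)$. Removing the absolute value produces a sign ambiguity, and the two possibilities $l\bigl(\beta^2(1-2l^2\mu)+2\alpha^2\mu f^2\bigr)>0$ and $<0$ correspond exactly to the two sign choices. In each regime the resulting equation is linear in $f^2$, so it can be solved in closed form; taking a square root then yields the two branches in (\ref{s26}). Substituting these expressions for $f(u)$ together with $g(u)=l$ into the parametrization (\ref{s1}) delivers the two displayed surfaces (\ref{s28}) and (\ref{s29}) directly.

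The principal obstacle is careful sign bookkeeping: tracking $\varepsilon=-1$ through the unwieldy expression (\ref{ss16}), correctly managing the outer absolute value after the $g=l$ substitution, and verifying that the sign-branch conditions in (\ref{s26}) are paired with the correct square-root expressions in the theorem statement. I do not anticipate any deeper geometric or analytic difficulty; the result is purely an algebraic inversion of the specialized weighted mean curvature under the constraint $g(u)=l$, entirely analogous to the $f(u)=k$ case leading to the preceding theorem.
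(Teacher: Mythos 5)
Your proposal is correct and follows essentially the same route as the paper: specialize (\ref{ss16}) to $g=l$, $g'=g''=0$, $\varepsilon=-1$ to obtain (\ref{s25}), then invert the resulting equation (which is indeed linear in $f^2$ in each sign regime) to get (\ref{s26}), and substitute into (\ref{s1}). Your observations that $\lambda$ drops out and that (\ref{s2}) keeps the denominator positive are consistent with what the paper implicitly uses.
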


Thus, if we assume that $H_{\phi}^{\mathcal{S}_{1}}$ is constant in
(\ref{s26}), we have $f$ is a constant, too. Therefore, we have

\begin{corollary}
The weighted mean curvature of the timelike general rotational surface of
first type $\mathcal{S}_{1}$ in $\mathbb{E}_{1}^{4}$ with density
$e^{\lambda(x^{2}+y^{2})+\mu(z^{2}-t^{2})}$ cannot be constant when $g(u)$ is
a nonzero constant.
\end{corollary}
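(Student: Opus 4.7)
My plan is a short proof by contradiction that reduces the claim to the non-degeneracy hypothesis (\ref{s2}) via the inversion formula (\ref{s26}) obtained in the preceding theorem. Assume $g(u)=l$ is a nonzero constant and suppose, for contradiction, that $H_\phi^{\mathcal{S}_1}$ equals some constant $c$ on the domain of $u$.

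With $g\equiv l$ substituted into (\ref{s25}), the weighted mean curvature depends on $u$ only through $f(u)$. Setting the resulting expression equal to $c$ yields an algebraic equation in $f$ whose coefficients $\alpha,\beta,l,\mu,c$ are all $u$-independent. Solving that equation is precisely the computation that produced the two branches of (\ref{s26}); both branches express $f$ as $\mp\tfrac{\beta}{\alpha}$ times a ratio of square roots of quantities built from these constants alone. Consequently $f(u)$ cannot exhibit any $u$-dependence and must itself be a constant function.

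But then $f'(u)\equiv 0$ and $g'(u)\equiv 0$, so $f'^2(u)+g'^2(u)\equiv 0$. This directly contradicts the regularity inequality $f'^2(u)+g'^2(u)>0$ imposed in (\ref{s2}) for $\mathcal{S}_1$ to be a bona fide timelike general rotational surface with a non-degenerate meridian curve. The contradiction establishes the corollary.

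There is essentially no analytic obstacle here: the argument is purely algebraic and leans entirely on Theorem~4 plus the regularity condition (\ref{s2}). The only minor points requiring care are checking that both branches of (\ref{s26}) indeed yield constant $f$ (they do, since the radicands are $u$-independent and the sign choice $\mp$ is also constant on the connected component where the radicand is positive) and observing that the argument applies uniformly to both cases $l(\beta^2(1-2l^2\mu)+2\alpha^2\mu f^2)\gtrless 0$ appearing in (\ref{s26}). Neither point presents a genuine difficulty.
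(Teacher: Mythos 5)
Your argument is correct and follows essentially the same route as the paper: the paper deduces from the inversion formula (\ref{s26}) that a constant $H_{\phi}^{\mathcal{S}_{1}}$ forces $f$ to be constant, and the corollary then follows because a constant $f$ together with $g\equiv l$ violates the regularity condition $f'^{2}+g'^{2}>0$ of (\ref{s2}). You merely make explicit the final contradiction that the paper leaves implicit, which is a welcome clarification rather than a departure.
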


Now, let us suppose that $f(u)=u$. In this case, from (\ref{ss16}), the
weighted mean curvature is%
\begin{equation}
H_{\phi}^{\mathcal{S}_{i}}=\frac{\left\vert
\begin{array}
[c]{l}%
\left(  \beta^{2}g+u\alpha^{2}g^{\prime}-2\varepsilon\mu g\left(  (\alpha
u)^{2}+\varepsilon(\beta g)^{2}\right)  \right)  \left(  1-\varepsilon
g^{\prime2}\right) \\
+\left(  (\alpha u)^{2}+\varepsilon(\beta g)^{2}\right)  \left(
2\varepsilon\lambda ug^{\prime}\left(  1-\varepsilon g^{\prime2}\right)
+g^{\prime\prime}\right)
\end{array}
\right\vert }{2\left(  \varepsilon(\alpha u)^{2}+(\beta g)^{2}\right)  \left(
g^{\prime2}-\varepsilon\right)  ^{3/2}}. \label{s30}%
\end{equation}

\begin{theorem}
\label{teo1}The timelike general rotational surfaces of first and second types
$\mathcal{S}_{i}$ in $\mathbb{E}_{1}^{4}$ with density $e^{\lambda(x^{2}%
+y^{2})+\mu(z^{2}-t^{2})}$ are weighted minimal if and only if, up to
parametrization, the meridian curves are determined by $c_{1}:x_{1}%
(u)=(u,0,g(u),0)$ and $c_{2}:x_{2}(u)=(u,0,0,g(u)),$ where $g(u)$ is a
solution to the following differential equations:

For $\mathcal{S}_{1}$
\begin{equation}
\left(  \arctan\left(  g^{\prime}(u)\right)  \right)  ^{\prime}=A_{1}
\label{s31}%
\end{equation}
and for $\mathcal{S}_{2}$
\begin{equation}
\left(  \log\left(  \frac{1-g^{\prime}(u)}{1+g^{\prime}(u)}\right)  \right)
^{\prime}=2A_{1}, \label{s32}%
\end{equation}
where
\begin{equation}
A_{1}=\frac{\left(  2(\lambda ug^{\prime}(u)-\mu g(u))\right)  \left(  (\alpha
u)^{2}+\varepsilon(\beta g(u))^{2}\right)  +\varepsilon\left(  \beta
^{2}g(u)+\alpha^{2}ug^{\prime}(u)\right)  }{(\alpha u)^{2}+\varepsilon(\beta
g(u))^{2}}. \label{s33}%
\end{equation}

\end{theorem}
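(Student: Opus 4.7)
The plan is to specialize the general weighted-minimality condition to the meridian parametrization $f(u)=u$ and recognize the resulting second-order ODE as an exact first derivative. Since conditions (\ref{s2}) and (\ref{s6}) guarantee that $f'$ does not vanish identically on a non-degenerate piece of the meridian, up to reparametrization we may take $f(u)=u$; this accounts for the ``up to parametrization'' clause. The meridians then have the claimed form $c_1(u)=(u,0,g(u),0)$ for $\mathcal{S}_1$ and $c_2(u)=(u,0,0,g(u))$ for $\mathcal{S}_2$.

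Substituting $f(u)=u$, $f'(u)=1$, $f''(u)=0$ into (\ref{ss16}) yields (\ref{s30}). Because the denominator $2(\varepsilon(\alpha u)^2+(\beta g)^2)(g'^2-\varepsilon)^{3/2}$ is nonzero by (\ref{s2}) and (\ref{s6}), the condition $H_\phi^{\mathcal{S}_i}=0$ is equivalent to the vanishing of the numerator:
\[
\bigl(\beta^2 g+\alpha^2 u g'-2\varepsilon\mu g((\alpha u)^2+\varepsilon(\beta g)^2)\bigr)(1-\varepsilon g'^2)+\bigl((\alpha u)^2+\varepsilon(\beta g)^2\bigr)\bigl(2\varepsilon\lambda u g'(1-\varepsilon g'^2)+g''\bigr)=0.
\]
Dividing through by $(1-\varepsilon g'^2)\bigl((\alpha u)^2+\varepsilon(\beta g)^2\bigr)$ and multiplying by $\varepsilon$, the equation rearranges to
\[
\frac{\varepsilon(\beta^2 g+\alpha^2 u g')+2(\lambda u g'-\mu g)\bigl((\alpha u)^2+\varepsilon(\beta g)^2\bigr)}{(\alpha u)^2+\varepsilon(\beta g)^2}=-\frac{\varepsilon g''}{1-\varepsilon g'^2},
\]
and the left-hand side is precisely the expression $A_1$ of (\ref{s33}).

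It remains to recognize the right-hand side as an exact derivative in each sign case. For $\mathcal{S}_1$ we have $\varepsilon=-1$, so the right-hand side becomes $g''/(1+g'^2)=(\arctan g')'$, giving (\ref{s31}). For $\mathcal{S}_2$ we have $\varepsilon=+1$, and the identity
\[
\Bigl(\log\tfrac{1-g'}{1+g'}\Bigr)'=-\tfrac{g''}{1-g'}-\tfrac{g''}{1+g'}=-\tfrac{2g''}{1-g'^2}
\]
shows that the right-hand side equals $\tfrac{1}{2}(\log((1-g')/(1+g')))'$, giving (\ref{s32}). The main obstacle is the algebraic reorganization of the weighted-minimality numerator into the compact form $A_1=-\varepsilon g''/(1-\varepsilon g'^2)$; once this is achieved, the identification with the exact-derivative expressions is mechanical, and the converse direction is obtained by reading the chain of equivalences backwards.
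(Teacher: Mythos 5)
Your proof is correct and follows essentially the same route as the paper: substitute $f(u)=u$ into (\ref{ss16}) to obtain (\ref{s30}), set the numerator to zero, rearrange it to $g''/(g'^2-\varepsilon)=A_1$, and identify the left side as $(\arctan g')'$ for $\varepsilon=-1$ and as $\tfrac12\bigl(\log\tfrac{1-g'}{1+g'}\bigr)'$ for $\varepsilon=1$. Your additional remarks on the nonvanishing denominator and the ``up to parametrization'' clause only make explicit what the paper leaves implicit.
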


\begin{proof}
If the timelike general rotational surfaces of first and second types
$\mathcal{S}_{i}$ in $\mathbb{E}_{1}^{4}$ with density $e^{\lambda(x^{2}%
+y^{2})+\mu(z^{2}-t^{2})}$ are weighted minimal, then from (\ref{s30}) we have%
\begin{equation}
\frac{g^{\prime\prime}(u)}{-\varepsilon+g^{\prime}(u)^{2}}=\frac{\left(
2\left(  \lambda ug^{\prime}(u)-\mu g(u)\right)  \right)  \left(  (\alpha
u)^{2}+\varepsilon(\beta g(u))^{2}\right)  +\varepsilon\left(  \beta
^{2}g(u)+\alpha^{2}ug^{\prime}(u)\right)  }{(\alpha u)^{2}+\varepsilon(\beta
g(u))^{2}}. \label{s33'}%
\end{equation}
Thus from (\ref{s33'}), we get (\ref{s31}) and (\ref{s32}) for $\varepsilon
=-1$ and $\varepsilon=1,$ respectively.
\end{proof}

Now, let us suppose that $g(u)=u$. In this case, from (\ref{ss16}), the
weighted mean curvature is%
\begin{equation}
H_{\phi}^{\mathcal{S}_{i}}=\frac{\left\vert
\begin{array}
[c]{l}%
\left(  \alpha^{2}f+u\beta^{2}f^{\prime}-2\varepsilon\mu u\left(
\varepsilon(\beta u)^{2}+(\alpha f)^{2}\right)  f^{\prime}\right)  \left(
f^{\prime2}-\varepsilon\right) \\
+\left(  \varepsilon(\beta u)^{2}+(\alpha f)^{2}\right)  \left(
2\varepsilon\lambda f\left(  f^{\prime2}-\varepsilon\right)  -f^{\prime\prime
}\right)
\end{array}
\right\vert }{2\left(  (\beta u)^{2}+\varepsilon(\alpha f)^{2}\right)  \left(
1-\varepsilon f^{\prime2}\right)  ^{3/2}}. \label{s34}%
\end{equation}
With similar procedure of the proof of the Theorem \ref{teo1}, from
(\ref{s34}) we can obtain the following theorem:

\begin{theorem}
The timelike general rotational surfaces of first and second types
$\mathcal{S}_{i}$ in $\mathbb{E}_{1}^{4}$ with density $e^{\lambda(x^{2}%
+y^{2})+\mu(z^{2}-t^{2})}$ are weighted minimal if and only if, up to
parametrization, the meridian curves are determined by $c_{1}:x_{1}%
(u)=(f(u),0,u,0)$ and $c_{2}:x_{2}(u)=(f(u),0,0,u),$ where $f(u)$ is a
solution to the following differential equations:

For $\mathcal{S}_{1}$
\begin{equation}
\left(  \arctan\left(  f^{\prime}(u)\right)  \right)  ^{\prime}=A_{2}
\label{s35}%
\end{equation}
and for $\mathcal{S}_{2}$
\begin{equation}
\left(  \log\left(  \frac{1-f^{\prime}(u)}{1+f^{\prime}(u)}\right)  \right)
^{\prime}=2A_{2}, \label{s36}%
\end{equation}
where
\begin{equation}
A_{2}=\frac{f(u)\left(  \alpha^{2}\varepsilon+2(\alpha^{2}\lambda
f(u)^{2}+\beta^{2}\varepsilon\lambda u^{2})\right)  +u\left(  \varepsilon
\beta^{2}\left(  1-2\mu u^{2}\right)  -2\alpha^{2}\mu f(u)^{2}\right)
f^{\prime}(u)}{(\beta u)^{2}+\varepsilon(\alpha f(u))^{2}}. \label{s37}%
\end{equation}

\end{theorem}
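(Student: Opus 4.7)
The plan is to mirror the argument given for Theorem \ref{teo1}, with the roles of $f$ and $g$ interchanged. Starting from formula (\ref{s34}), weighted minimality $H_{\phi}^{\mathcal{S}_{i}}=0$ forces the expression inside the absolute value in the numerator to vanish. I would isolate the unique term containing $f''(u)$, move everything else to the other side, and divide by $\bigl(\varepsilon(\beta u)^{2}+(\alpha f)^{2}\bigr)\bigl(f'^{2}-\varepsilon\bigr)$, arriving at a preliminary identity of the shape
$$\frac{f''(u)}{f'(u)^{2}-\varepsilon}=(\text{an explicit rational expression in }u,f,f').$$
A short algebraic manipulation, based on the identity $(\beta u)^{2}+\varepsilon(\alpha f)^{2}=\varepsilon\bigl[(\alpha f)^{2}+\varepsilon(\beta u)^{2}\bigr]$ (which holds because $\varepsilon^{2}=1$), would then recognize that rational expression as precisely the quantity $A_{2}$ defined in (\ref{s37}); the factor $\varepsilon$ multiplying $\alpha^{2}$ and $\beta^{2}$ in the numerator of $A_{2}$ is produced by exactly this sign swap.

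With the intermediate identity $f''/(f'^{2}-\varepsilon)=A_{2}$ in hand, the two cases split according to the sign of $\varepsilon$. For $\mathcal{S}_{1}$ (where $\varepsilon=-1$) the left-hand side is $f''/(1+f'^{2})=(\arctan f')'$, which yields (\ref{s35}). For $\mathcal{S}_{2}$ (where $\varepsilon=1$) a direct computation gives
$$\Bigl(\log\tfrac{1-f'}{1+f'}\Bigr)'=-\tfrac{f''}{1-f'}-\tfrac{f''}{1+f'}=\tfrac{2f''}{f'^{2}-1},$$
so the equation becomes (\ref{s36}). Both converses follow by running the same manipulations backwards, and the meridian-curve descriptions $c_{1}:x_{1}(u)=(f(u),0,u,0)$ and $c_{2}:x_{2}(u)=(f(u),0,0,u)$ are just the explicit form of the surfaces (\ref{s1}) and (\ref{s5}) when $g(u)=u$.

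The only mild obstacle is bookkeeping in that algebraic identification: the denominator that drops out of (\ref{s34}) after clearing is $(\alpha f)^{2}+\varepsilon(\beta u)^{2}$, whereas $A_{2}$ is presented with the sign-swapped denominator $(\beta u)^{2}+\varepsilon(\alpha f)^{2}$. Since these two expressions differ only by an overall factor of $\varepsilon$, a single multiplication by $\varepsilon$ at the appropriate step reconciles them and simultaneously generates the $\varepsilon$-factors seen in the numerator of (\ref{s37}). No analytic difficulty arises beyond this symbolic manipulation.
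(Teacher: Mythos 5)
Your proposal is correct and follows essentially the same route as the paper, which itself only remarks that the result is obtained ``with similar procedure of the proof of Theorem \ref{teo1}'': set the numerator of (\ref{s34}) to zero, isolate $f''/(f'^{2}-\varepsilon)$, identify the remaining rational expression with $A_{2}$ via the sign identity $(\beta u)^{2}+\varepsilon(\alpha f)^{2}=\varepsilon\bigl[(\alpha f)^{2}+\varepsilon(\beta u)^{2}\bigr]$, and then recognize the left-hand side as $(\arctan f')'$ for $\varepsilon=-1$ and as $\tfrac{1}{2}\bigl(\log\tfrac{1-f'}{1+f'}\bigr)'$ for $\varepsilon=1$. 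Your bookkeeping of the $\varepsilon$-factors checks out against (\ref{s37}), so no further comment is needed.
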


\subsection{Weighted Flat Timelike General Rotational surfaces in
$\mathbb{E}_{1}^{4}$ with Density $e^{\lambda_{1}x^{2}+\lambda_{2}%
y^{2}+\lambda_{3}z^{2}+\lambda_{4}t^{2}}$}

\ 

From (\ref{s11}) and $K_{\phi}^{\mathcal{S}_{i}}=K^{\mathcal{S}_{i}}%
-\Delta\phi$, we have

\begin{proposition}
The weighted Gaussian curvatures of the timelike general rotation surfaces of
first and second types $\mathcal{S}_{i}$ parametrized by (\ref{s1}) and
(\ref{s5}) in $\mathbb{E}_{1}^{4}$ with density $e^{\lambda_{1}x^{2}%
+\lambda_{2}y^{2}+\lambda_{3}z^{2}+\lambda_{4}t^{2}}$ are%
\begin{equation}
K_{\phi}^{\mathcal{S}_{i}}=\frac{\left(
\begin{array}
[c]{l}%
-2\delta\left(  (\alpha f)^{2}+\varepsilon(\beta g)^{2}\right)  ^{2}\left(
-\varepsilon f^{\prime2}+g^{\prime2}\right)  ^{2}+\alpha^{2}\beta^{2}\left(
gf^{\prime}-fg^{\prime}\right)  ^{2}\left(  -\varepsilon f^{\prime2}%
+g^{\prime2}\right) \\
+\left(  \varepsilon(\alpha f)^{2}+(\beta g)^{2}\right)  \left(  \beta
^{2}gf^{\prime}+\alpha^{2}fg^{\prime}\right)  \left(  f^{\prime\prime
}g^{\prime}-f^{\prime}g^{\prime\prime}\right)
\end{array}
\right)  }{\left(  (\alpha f)^{2}+\varepsilon(\beta g)^{2}\right)  ^{2}\left(
-\varepsilon f^{\prime2}+g^{\prime2}\right)  ^{2}}, \label{s39}%
\end{equation}
where $\delta=\lambda_{1}$$+\lambda$$_{2}+\lambda$$_{3}-\lambda$$_{4}.$
\end{proposition}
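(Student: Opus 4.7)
The plan is to apply the defining identity $K_\phi^{\mathcal{S}_i} = K^{\mathcal{S}_i} - \Delta \phi$ directly. Since (\ref{s11}) already provides a closed-form expression for $K^{\mathcal{S}_i}$, the only new quantity to compute is $\Delta \phi$ for $\phi = \lambda_1 x^2 + \lambda_2 y^2 + \lambda_3 z^2 + \lambda_4 t^2$.

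First I would compute the ambient Laplacian on $\mathbb{E}_1^4$. Because the flat Lorentzian metric has signature $(+,+,+,-)$, we have $\Delta = \partial_x^2 + \partial_y^2 + \partial_z^2 - \partial_t^2$, and a direct differentiation yields
\[
\Delta \phi = 2\lambda_1 + 2\lambda_2 + 2\lambda_3 - 2\lambda_4 = 2\delta.
\]
The minus sign in front of $\lambda_4$, coming from the signature contribution of the timelike direction, is precisely what produces the constant $\delta$ defined in the statement.

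Next I would rewrite $K^{\mathcal{S}_i}$ from (\ref{s11}) in terms of the shorthand $\varepsilon=(-1)^i$, place $K^{\mathcal{S}_i}$ and $2\delta$ over the common denominator $\left((\alpha f)^2 + \varepsilon(\beta g)^2\right)^2\left(-\varepsilon f'^2 + g'^2\right)^2$, and subtract. The $-\Delta\phi = -2\delta$ contribution produces precisely the first summand in the numerator of (\ref{s39}), while the remaining two summands come directly from the numerator of (\ref{s11}) once expressed with $\varepsilon$ in place of $(-1)^i$.

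The only conceptual point, and the main (though quite minor) obstacle, is the choice of Laplacian: one must interpret $\Delta$ as the ambient Lorentzian Laplacian on $\mathbb{E}_1^4$ rather than as the intrinsic Laplace--Beltrami operator on $\mathcal{S}_i$, since only the former produces a constant $\Delta \phi$ compatible with the closed form (\ref{s39}); indeed, pulling $\phi$ back to $\mathcal{S}_i$ via the parametrizations (\ref{s1})--(\ref{s5}) yields a function genuinely depending on both $u$ and $v$ whose intrinsic Laplacian is not constant. Once that interpretation is fixed, the rest is an elementary recombination of fractions.
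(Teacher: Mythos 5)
Your proposal is correct and follows exactly the route the paper takes: the paper's entire argument is the one-line remark ``From (\ref{s11}) and $K_{\phi}^{\mathcal{S}_{i}}=K^{\mathcal{S}_{i}}-\Delta\phi$, we have \dots'', and your computation $\Delta\phi = 2(\lambda_1+\lambda_2+\lambda_3-\lambda_4)=2\delta$ with the ambient Lorentzian Laplacian, followed by placing everything over the common denominator, is precisely what is implicit there. Your added observation that $\Delta$ must be read as the ambient operator (not the intrinsic Laplace--Beltrami operator on $\mathcal{S}_i$) is a correct and worthwhile clarification that the paper leaves unstated.
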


Consequently, we have

\begin{theorem}
The timelike general rotation surfaces of first and second types
$\mathcal{S}_{i}$ parametrized by (\ref{s1}) and (\ref{s5}) in $\mathbb{E}%
_{1}^{4}$ with density $e^{\lambda_{1}x^{2}+\lambda_{2}y^{2}+\lambda_{3}%
z^{2}+\lambda_{4}t^{2}}$ are weighted flat if and only if the equation%
\begin{align}
&  -2\delta\left(  (\alpha f)^{2}+\varepsilon(\beta g)^{2}\right)  ^{2}\left(
-\varepsilon f^{\prime2}+g^{\prime2}\right)  ^{2}+\alpha^{2}\beta^{2}\left(
gf^{\prime}-fg^{\prime}\right)  ^{2}\left(  -\varepsilon f^{\prime2}%
+g^{\prime2}\right) \nonumber\\
&  +\left(  \varepsilon(\alpha f)^{2}+(\beta g)^{2}\right)  \left(  \beta
^{2}gf^{\prime}+\alpha^{2}fg^{\prime}\right)  \left(  f^{\prime\prime
}g^{\prime}-f^{\prime}g^{\prime\prime}\right)  =0 \label{s40}%
\end{align}
holds.
\end{theorem}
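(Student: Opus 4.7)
The plan is short: this theorem is an immediate corollary of the preceding proposition, which supplies the closed-form expression (\ref{s39}) for $K_{\phi}^{\mathcal{S}_{i}}$. By definition, $\mathcal{S}_{i}$ is weighted flat exactly when $K_{\phi}^{\mathcal{S}_{i}}=0$, so the first step is simply to set the right-hand side of (\ref{s39}) equal to zero.

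The second step is to argue that the denominator in (\ref{s39}) cannot vanish, so that $K_{\phi}^{\mathcal{S}_{i}}=0$ is equivalent to the vanishing of the numerator, which is exactly equation (\ref{s40}). For $\mathcal{S}_{1}$ (where $\varepsilon=-1$), the admissibility conditions (\ref{s2}) give $\alpha^{2}f^{2}-\beta^{2}g^{2}<0$ and $f^{\prime 2}+g^{\prime 2}>0$, so both $\left((\alpha f)^{2}-(\beta g)^{2}\right)^{2}$ and $\left(f^{\prime 2}+g^{\prime 2}\right)^{2}$ are strictly positive. For $\mathcal{S}_{2}$ ($\varepsilon=1$), conditions (\ref{s6}) likewise ensure $\left((\alpha f)^{2}+(\beta g)^{2}\right)^{2}>0$ and $\left(-f^{\prime 2}+g^{\prime 2}\right)^{2}>0$. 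Hence the denominator in (\ref{s39}) is strictly positive throughout the parameter domain, and clearing it turns the vanishing condition on $K_{\phi}^{\mathcal{S}_{i}}$ into the displayed identity (\ref{s40}).

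If there is any substantive step in the overall argument, it lies not in this corollary but in the preceding proposition: one has to verify that the Laplace--Beltrami of $\phi=\lambda_{1}x^{2}+\lambda_{2}y^{2}+\lambda_{3}z^{2}+\lambda_{4}t^{2}$ restricted to $\mathcal{S}_{i}$ reduces to the constant $2\delta$ with $\delta=\lambda_{1}+\lambda_{2}+\lambda_{3}-\lambda_{4}$, the Lorentzian signature $(+,+,+,-)$ producing the minus sign in front of $\lambda_{4}$, and then to combine this via $K_{\phi}^{\mathcal{S}_{i}}=K^{\mathcal{S}_{i}}-\Delta\phi$ with the intrinsic Gaussian curvature formula (\ref{s11}). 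Once that proposition is in hand, the present theorem follows in a single algebraic line: multiply through by the (nonvanishing) denominator to obtain (\ref{s40}), and conclude the equivalence in both directions.
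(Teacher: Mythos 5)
Your proposal is correct and is exactly the paper's (implicit) argument: the authors state this theorem with only ``Consequently, we have'' after the proposition giving (\ref{s39}), so the content is just that $K_{\phi}^{\mathcal{S}_{i}}=0$ if and only if the numerator of (\ref{s39}) vanishes, which is (\ref{s40}). Your additional check that the denominator is strictly positive under the admissibility conditions (\ref{s2}) and (\ref{s6}) is a welcome bit of rigor that the paper leaves unstated.
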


\begin{corollary}
For $\lambda_{1}+\lambda_{2}+\lambda_{3}=\lambda_{4},$ the timelike general
rotation surfaces of first and second types $\mathcal{S}_{i}$ parametrized by
(\ref{s1}) and (\ref{s5}) in $\mathbb{E}_{1}^{4}$ with density $e^{\lambda
_{1}x^{2}+\lambda_{2}y^{2}+\lambda_{3}z^{2}+\lambda_{4}t^{2}}$ are weighted
flat if the equation
\begin{equation}
\alpha^{2}\beta^{2}\left(  gf^{\prime}-fg^{\prime}\right)  ^{2}\left(
-\varepsilon f^{\prime2}+g^{\prime2}\right)  +\left(  \varepsilon(\alpha
f)^{2}+(\beta g)^{2}\right)  \left(  \beta^{2}gf^{\prime}+\alpha^{2}%
fg^{\prime}\right)  \left(  f^{\prime\prime}g^{\prime}-f^{\prime}%
g^{\prime\prime}\right)  =0 \label{s41}%
\end{equation}
holds.
\end{corollary}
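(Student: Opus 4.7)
The plan is to derive the corollary as an immediate specialization of the preceding Theorem, which characterizes weighted flatness of $\mathcal{S}_i$ by the vanishing of the numerator in the expression (\ref{s39}) for $K_{\phi}^{\mathcal{S}_{i}}$, namely equation (\ref{s40}). The key observation is that the Laplacian contribution appears in (\ref{s39}) only through the scalar $\delta=\lambda_{1}+\lambda_{2}+\lambda_{3}-\lambda_{4}$, which multiplies the single summand $-2\left((\alpha f)^{2}+\varepsilon(\beta g)^{2}\right)^{2}\left(-\varepsilon f^{\prime2}+g^{\prime2}\right)^{2}$ in (\ref{s40}). Under the hypothesis $\lambda_{1}+\lambda_{2}+\lambda_{3}=\lambda_{4}$, we have $\delta=0$, so that entire term drops out.

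Concretely, I would proceed as follows. First, invoke the Theorem stating that $\mathcal{S}_{i}$ is weighted flat if and only if (\ref{s40}) holds. Second, substitute $\delta=0$ (which is equivalent to the hypothesis $\lambda_{1}+\lambda_{2}+\lambda_{3}=\lambda_{4}$) into (\ref{s40}); the first summand vanishes identically and the surviving terms are exactly
\[
\alpha^{2}\beta^{2}\left(gf^{\prime}-fg^{\prime}\right)^{2}\left(-\varepsilon f^{\prime2}+g^{\prime2}\right)+\left(\varepsilon(\alpha f)^{2}+(\beta g)^{2}\right)\left(\beta^{2}gf^{\prime}+\alpha^{2}fg^{\prime}\right)\left(f^{\prime\prime}g^{\prime}-f^{\prime}g^{\prime\prime}\right),
\]
so that (\ref{s40}) reduces precisely to (\ref{s41}). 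Third, conclude that (\ref{s41}) is then a sufficient condition for weighted flatness, as asserted.

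There is essentially no hard step here: the argument is a direct substitution, and the corollary is a one-line consequence of the preceding Theorem. The only subtlety worth pointing out in the write-up is the logical direction. In the Theorem we have an \emph{iff} characterization, whereas the corollary claims only \emph{sufficiency} (``is weighted flat if''), because the vanishing of (\ref{s41}) together with $\delta=0$ is exactly what is needed to make (\ref{s40}) hold. One could actually strengthen the statement to ``if and only if'' under the assumption $\delta=0$, but as stated the sufficiency direction is immediate and requires no further computation. Hence I would present the proof as a two-sentence argument: set $\delta=0$ in (\ref{s40}) and read off (\ref{s41}).
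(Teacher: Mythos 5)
Your proposal is correct and is exactly the (implicit) argument in the paper: the corollary follows from the preceding theorem by substituting $\delta=\lambda_{1}+\lambda_{2}+\lambda_{3}-\lambda_{4}=0$ into (\ref{s40}), which kills the first summand and leaves precisely (\ref{s41}). Your side remark that the conclusion could be stated as an equivalence under this hypothesis is also accurate.
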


Thus, we have

\begin{theorem}
For $\lambda_{1}+\lambda_{2}+\lambda_{3}=\lambda_{4},$ the timelike general
rotation surfaces of first and second types $\mathcal{S}_{i}$ parametrized by
(\ref{s1}) and (\ref{s5}) in $\mathbb{E}_{1}^{4}$ with density $e^{\lambda
_{1}x^{2}+\lambda_{2}y^{2}+\lambda_{3}z^{2}+\lambda_{4}t^{2}}$ are weighted
flat if the functions $f$ and $g$ are linear dependent.
\end{theorem}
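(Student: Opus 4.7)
The plan is to reduce to the sufficient condition already packaged in the preceding corollary: under the hypothesis $\lambda_{1}+\lambda_{2}+\lambda_{3}=\lambda_{4}$, equation (\ref{s41}) forces weighted flatness. It therefore suffices to show that linear dependence of $f$ and $g$ makes (\ref{s41}) hold identically on $J$.

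First I would unpack the hypothesis. Linear dependence on $J$ yields constants $a,b$, not both zero, with $af(u)+bg(u)\equiv 0$. If $b\neq 0$, set $c=-a/b$, so that $g=cf$, and differentiation preserves the relation: $g'=cf'$ and $g''=cf''$. If $b=0$, then $a\neq 0$ forces $f\equiv 0$, and hence $f'\equiv f''\equiv 0$. In either branch, a one-line substitution yields the two identities
\[
gf'-fg'\;\equiv\;0 \qquad \text{and} \qquad f'g''-f''g'\;\equiv\;0.
\]

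Next I would observe that each of the two summands on the left-hand side of (\ref{s41}) carries exactly one of these vanishing factors: the first summand contains the Wronskian square $(gf'-fg')^{2}$, while the second summand contains the factor $(f''g'-f'g'')$. Consequently both summands vanish identically on $J$, regardless of the sign or magnitude of the remaining polynomial factors $\alpha^{2}\beta^{2}(-\varepsilon f'^{2}+g'^{2})$ and $(\varepsilon(\alpha f)^{2}+(\beta g)^{2})(\beta^{2}gf'+\alpha^{2}fg')$. Therefore (\ref{s41}) holds, and by the previous corollary $K_{\phi}^{\mathcal{S}_{i}}\equiv 0$, which is the desired weighted flatness.

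There is essentially no obstacle: the argument is a short algebraic verification built on the fact that linear dependence of $f$ and $g$ is transmitted to their derivatives through the constant of proportionality, so both Wronskian-type determinants vanish. The only bookkeeping worth flagging is the degenerate case $f\equiv 0$ (or $g\equiv 0$), which needs a separate sentence but is even more immediate since every factor involving the vanishing function is trivially zero. One should also note, for non-vacuity, that the regularity inequalities (\ref{s2}) and (\ref{s6}) remain compatible with a proportional pair $g=cf$ for a suitable range of $c$; this is a construction check rather than a gap in the implication being proved.
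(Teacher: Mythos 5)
Your proposal is correct and matches the paper's (largely implicit) argument: the paper derives this theorem directly from the preceding corollary by observing that linear dependence $g=cf$ annihilates both the Wronskian $gf'-fg'$ and the factor $f''g'-f'g''$ in equation (\ref{s41}). Your extra care with the degenerate branch $f\equiv 0$ and the compatibility with the regularity inequalities is a welcome refinement but not a departure in method.
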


Now, let we construct two examples for the weighted flat timelike general
rotational surfaces of first and second types $\mathcal{S}_{i}.$

\begin{example}
If we take $f(u)=u^{3},$ $g(u)=2u^{3}$ and $\alpha=1,$ $\beta=2$ in
(\ref{s1}), then we get the following timelike general rotational surfaces of
first type:%
\begin{equation}
\mathcal{S}_{1}:X(u,v)=(u^{3}\cos v,u^{3}\sin v,2u^{3}\cosh(2v),2u^{3}%
\sinh(2v)). \label{ex7}%
\end{equation}
The weighted Gaussian curvature of this surface in $\mathbb{E}_{1}^{4}$ with
density $e^{\lambda_{1}x^{2}+\lambda_{2}y^{2}+\lambda_{3}z^{2}+\lambda
_{4}t^{2}}$ is%
\begin{equation}
K_{\phi}^{\mathcal{S}_{1}}=-2\delta=-2(\lambda_{1}+\lambda_{2}+\lambda
_{3}-\lambda_{4}) \label{ex8}%
\end{equation}
Thus, the timelike general rotational surfaces of first type $\mathcal{S}_{1}$
parametrized by (\ref{ex7}) is weighted flat in $\mathbb{E}_{1}^{4}$ with
density $e^{\lambda_{1}x^{2}+\lambda_{2}y^{2}+\lambda_{3}z^{2}+\lambda
_{4}t^{2}},$ where $\lambda_{1}+\lambda_{2}+\lambda_{3}=\lambda_{4}.$ In
Figure 2(a), one can see the projection of the surface (\ref{ex7}) into
$x_{1}x_{2}x_{3}$-space.
\end{example}

\begin{example}
If we take $f(u)=e^{2u},$ $g(u)=3e^{2u}$ and $\alpha=3,$ $\beta=2$ in
(\ref{s5}), then we get the following timelike general rotational surfaces of
second type:%
\begin{equation}
\mathcal{S}_{2}:X(u,v)=(e^{2u}\cos(3v),e^{2u}\sin(3v),3e^{2u}\sinh
(2v),3e^{2u}\cosh(2v)). \label{ex10}%
\end{equation}
The weighted Gaussian curvature of this surface in $\mathbb{E}_{1}^{4}$ with
density $e^{\lambda_{1}x^{2}+\lambda_{2}y^{2}+\lambda_{3}z^{2}+\lambda
_{4}t^{2}}$ is%
\begin{equation}
K_{\phi}^{\mathcal{S}_{2}}=-2\delta=-2(\lambda_{1}+\lambda_{2}+\lambda
_{3}-\lambda_{4}) \label{ex11}%
\end{equation}
Thus, the timelike general rotational surfaces of second type $\mathcal{S}%
_{2}$ parametrized by (\ref{ex10}) is weighted flat in $\mathbb{E}_{1}^{4}$
with density $e^{\lambda_{1}x^{2}+\lambda_{2}y^{2}+\lambda_{3}z^{2}%
+\lambda_{4}t^{2}},$ where $\lambda_{1}+\lambda_{2}+\lambda_{3}=\lambda_{4}.$
In Figure 2(b), one can see the projection of the surface (\ref{ex10}) into
$x_{1}x_{2}x_{4}$-space.
\end{example}

\begin{figure}[H]
\centering
\includegraphics[
	height=3.5in, width=5.1in
	]	{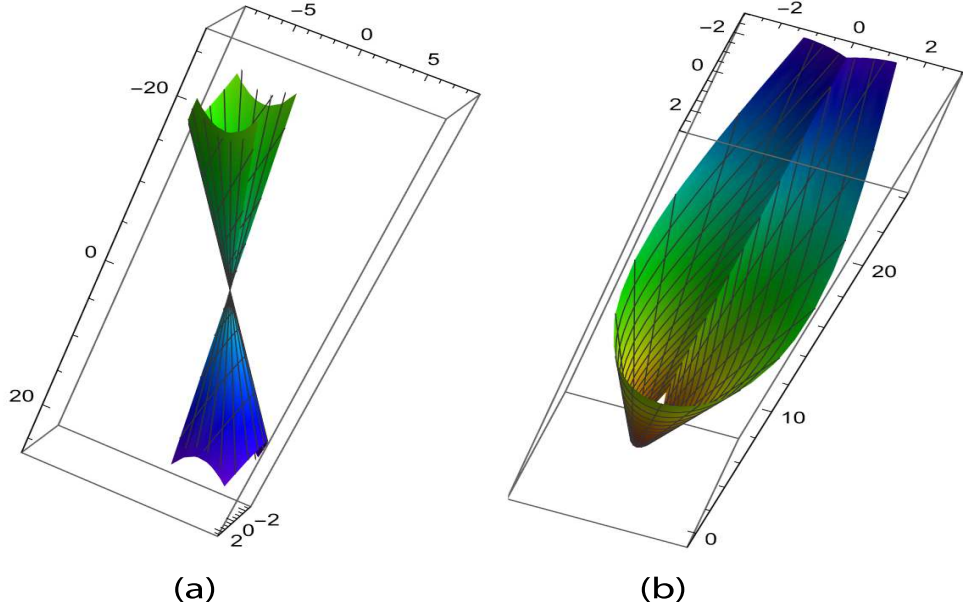}\caption{The projection of the weighted flat timelike general
rotational surface of first type (\ref{ex7}) into $x_{1}x_{2}x_{3}$-space
\textbf{(a)} and the projection of the weighted flat timelike general
rotational surface of second type (\ref{ex10}) into $x_{1}x_{2}x_{4}$-space
\textbf{(b)}}%
\label{fig:2}%
\end{figure}

Now, let we suppose that $f(u)=k$ for the timelike general rotation surfaces
of first and second types $\mathcal{S}_{i}$ (resp. $g(u)=l$ for the timelike
general rotation surface of first type $\mathcal{S}_{1}$) $k,l$ are a nonzero
constants. Then from (\ref{s39}), the weighted Gaussian curvatures are%
\begin{align}
&  K_{\phi}^{\mathcal{S}_{i}}=\frac{\alpha^{2}\beta^{2}k^{2}-2\delta\left(
(\alpha k)^{2}+\varepsilon(\beta g)^{2}\right)  ^{2}}{\left(  (\alpha
k)^{2}+\varepsilon(\beta g)^{2}\right)  ^{2}}\label{s42}\\
&  \left(  \text{ resp. }K_{\phi}^{\mathcal{S}_{1}}=\frac{\alpha^{2}\beta
^{2}l^{2}-2\delta\left(  (\beta l)^{2}-(\alpha f)^{2}\right)  ^{2}}{\left(
(\beta l)^{2}-(\alpha f)^{2}\right)  ^{2}}\right)  .\label{s43}%
\end{align}
If we find the function $g(u)$ (resp. $f(u)$) from (\ref{s42}) (resp.
(\ref{s43})) according to the weighted Gaussian curvature, then we get%
\begin{align}
&  g(u)=\mp\frac{1}{\beta}\sqrt{-\varepsilon(\alpha k)^{2}\mp\frac{\alpha\beta
k}{\sqrt{K_{\phi}^{\mathcal{S}_{i}}+2\delta}}}\label{s44}\\
&  \left(  \text{ resp. }f(u)=\mp\frac{1}{\alpha}\sqrt{(\beta l)^{2}\mp
\frac{\alpha\beta l}{\sqrt{K_{\phi}^{\mathcal{S}_{1}}+2\delta}}}\right)
.\label{s45}%
\end{align}
Consequently, we have

\begin{theorem}
The timelike general rotation surfaces of first and second types
$\mathcal{S}_{i}$ parametrized by (\ref{s1}) and (\ref{s5}) in $\mathbb{E}%
_{1}^{4}$ with density $e^{\lambda_{1}x^{2}+\lambda_{2}y^{2}+\lambda_{3}%
z^{2}+\lambda_{4}t^{2}}$ can be parametrized in terms of the weighted Gaussian
curvature by%
\begin{align}
\mathcal{S}_{1}  &  :X_{1}(u,v)=(k\cos(\alpha v),k\sin(\alpha v),\mp\frac
{1}{\beta}\sqrt{(\alpha k)^{2}\mp\frac{\alpha\beta k}{\sqrt{K_{\phi
}^{\mathcal{S}_{1}}+2\delta}}}\cosh(\beta v),\nonumber\\
&  \text{
\ \ \ \ \ \ \ \ \ \ \ \ \ \ \ \ \ \ \ \ \ \ \ \ \ \ \ \ \ \ \ \ \ \ \ \ \ \ \ \ }%
\mp\frac{1}{\beta}\sqrt{(\alpha k)^{2}\mp\frac{\alpha\beta k}{\sqrt{K_{\phi
}^{\mathcal{S}_{1}}+2\delta}}}\sinh(\beta v)),\label{s46}\\
\mathcal{S}_{2}  &  :X_{2}(u,v)=(k\cos(\alpha v),k\sin(\alpha v),\mp\frac
{1}{\beta}\sqrt{-(\alpha k)^{2}\mp\frac{\alpha\beta k}{\sqrt{K_{\phi
}^{\mathcal{S}_{2}}+2\delta}}}\sinh(\beta v),\nonumber\\
&  \text{
\ \ \ \ \ \ \ \ \ \ \ \ \ \ \ \ \ \ \ \ \ \ \ \ \ \ \ \ \ \ \ \ \ \ \ \ \ \ \ \ }%
\mp\frac{1}{\beta}\sqrt{-(\alpha k)^{2}\mp\frac{\alpha\beta k}{\sqrt{K_{\phi
}^{\mathcal{S}_{2}}+2\delta}}}\cosh(\beta v)). \label{s47}%
\end{align}%
\begin{equation}
\left(
\begin{array}
[c]{l}%
\text{resp. }\mathcal{S}_{1}:X_{1}(u,v)=(\mp\frac{1}{\alpha}\sqrt{(\beta
l)^{2}\mp\frac{\alpha\beta l}{\sqrt{K_{\phi}^{\mathcal{S}_{1}}+2\delta}}}%
\cos(\alpha v),\mp\frac{1}{\alpha}\sqrt{(\beta l)^{2}\mp\frac{\alpha\beta
l}{\sqrt{K_{\phi}^{\mathcal{S}_{1}}+2\delta}}}\sin(\alpha v),\\
\text{\ \ \ \ \ \ \ \ \ \ \ \ \ \ \ \ \ \ \ \ \ \ \ \ \ \ \ }l\cosh(\beta
v),l\sinh(\beta v)),
\end{array}
\right)  \label{s48}%
\end{equation}
where $\alpha,$ $\beta,$ $k,$ $l$ and $\lambda_{i}$ are nonzero constants
($\alpha>0,$ $\beta>0$).
\end{theorem}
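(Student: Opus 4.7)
The plan is to specialize the weighted Gaussian curvature formula (\ref{s39}) to the two degenerate meridian configurations ($f=k$ constant for both $\mathcal{S}_1,\mathcal{S}_2$, and $g=l$ constant for $\mathcal{S}_1$) and then to invert the resulting algebraic relations to obtain $g$ (respectively $f$) as an explicit function of $K_{\phi}^{\mathcal{S}_i}$.

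First I would substitute $f(u)=k$, $f'=f''=0$ into (\ref{s39}). Every term carrying $f'$ or $f''$ collapses, and what survives in the numerator is exactly $-2\delta\bigl((\alpha k)^{2}+\varepsilon(\beta g)^{2}\bigr)^{2}(g')^{4}+\alpha^{2}\beta^{2}k^{2}(g')^{4}$, while the denominator reduces to $\bigl((\alpha k)^{2}+\varepsilon(\beta g)^{2}\bigr)^{2}(g')^{4}$. The $(g')^{4}$ factors cancel, yielding precisely (\ref{s42}). The analogous substitution $g(u)=l$, $g'=g''=0$ (only legitimate for $\mathcal{S}_1$, since (\ref{s6}) forbids $g'=0$ for $\mathcal{S}_2$) kills all terms with $g'$ or $g''$ in (\ref{s39}) and returns (\ref{s43}); this is the content covered by the discussion preceding the theorem.

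Next I would solve (\ref{s42}) for $g^{2}$. Writing $K_{\phi}^{\mathcal{S}_i}+2\delta=\dfrac{\alpha^{2}\beta^{2}k^{2}}{\bigl((\alpha k)^{2}+\varepsilon(\beta g)^{2}\bigr)^{2}}$, one extracts a square root, choosing a sign, to obtain $(\alpha k)^{2}+\varepsilon(\beta g)^{2}=\pm\dfrac{\alpha\beta k}{\sqrt{K_{\phi}^{\mathcal{S}_i}+2\delta}}$, and then solves for $g$; this is (\ref{s44}). The same manipulation applied to (\ref{s43}) produces (\ref{s45}). Finally, plugging these expressions into the parametrizations (\ref{s1}) with $f=k$, and into (\ref{s5}) with $f=k$, gives (\ref{s46}) and (\ref{s47}); plugging (\ref{s45}) into (\ref{s1}) with $g=l$ gives (\ref{s48}). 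Since $\varepsilon=-1$ for $\mathcal{S}_1$ and $\varepsilon=+1$ for $\mathcal{S}_2$, the signs inside the inner radical in (\ref{s46})--(\ref{s47}) follow automatically from (\ref{s44}).

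The only genuinely delicate point is bookkeeping of signs when taking square roots: one must verify that the choices of $\pm$ in (\ref{s44})--(\ref{s45}) are consistent with the standing inequalities (\ref{s2}) and (\ref{s6}) (e.g.\ $\alpha^{2}f^{2}-\beta^{2}g^{2}<0$ for $\mathcal{S}_1$ and $\alpha^{2}f^{2}+\beta^{2}g^{2}>0$ for $\mathcal{S}_2$), and that $K_{\phi}^{\mathcal{S}_i}+2\delta$ has the sign needed for the outer radical to be real. Everything else is routine algebraic inversion and direct substitution, so no further obstacle arises.
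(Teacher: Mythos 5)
Your proposal is correct and follows exactly the route the paper takes: the paper derives (\ref{s42})--(\ref{s43}) by setting $f=k$ (resp.\ $g=l$) in (\ref{s39}), inverts these to get (\ref{s44})--(\ref{s45}), and then substitutes into (\ref{s1}) and (\ref{s5}) to state the theorem. Your additional remark about checking the sign choices against (\ref{s2}), (\ref{s6}) and the reality of $\sqrt{K_{\phi}^{\mathcal{S}_i}+2\delta}$ is a point the paper leaves implicit.
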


\begin{corollary}
The weighted Gaussian curvatures of the timelike general rotational surfaces
of first and second types $\mathcal{S}_{i}$ (resp. the weighted Gaussian
curvatures of the timelike general rotational surface of first type
$\mathcal{S}_{1}$) in $\mathbb{E}_{1}^{4}$ with density $e^{\lambda_{1}%
x^{2}+\lambda_{2}y^{2}+\lambda_{3}z^{2}+\lambda_{4}t^{2}}$ cannot be constant
when $f(u)$ (resp. $g(u)$) is a nonzero constant.
\end{corollary}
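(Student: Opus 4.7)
The plan is to prove the corollary by contradiction, reading the conclusion essentially directly off the explicit formulas (\ref{s42}) and (\ref{s43}) that were derived just before the statement.

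First I would rewrite (\ref{s42}) in the more transparent form
\[
K_{\phi}^{\mathcal{S}_{i}}+2\delta=\frac{\alpha^{2}\beta^{2}k^{2}}{\left((\alpha k)^{2}+\varepsilon(\beta g(u))^{2}\right)^{2}},
\]
valid under the hypothesis $f(u)=k$ with $k$ a nonzero constant. Assume for contradiction that $K_{\phi}^{\mathcal{S}_{i}}$ is constant on the parameter interval $J$. Then the right-hand side is constant, and since the numerator $\alpha^{2}\beta^{2}k^{2}$ is a fixed nonzero number, the denominator $\bigl((\alpha k)^{2}+\varepsilon(\beta g(u))^{2}\bigr)^{2}$ is a nonzero constant. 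Taking a positive square root and using $\varepsilon\beta^{2}\neq0$, this forces $g(u)^{2}$, hence $g(u)$ itself, to be constant on $J$.

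The final step is to extract a contradiction from the regularity conditions built into the definitions of $\mathcal{S}_{1}$ and $\mathcal{S}_{2}$. If $f\equiv k$ and $g$ is also constant, then $f'^{2}+g'^{2}\equiv 0$, which violates the second inequality of (\ref{s2}) for $\mathcal{S}_{1}$; and $f'^{2}-g'^{2}\equiv 0$, which violates the first (strict) inequality of (\ref{s6}) for $\mathcal{S}_{2}$. Either way the assumption of constant $K_{\phi}^{\mathcal{S}_{i}}$ is incompatible with $\mathcal{S}_{i}$ being an honest timelike general rotational surface. The parenthetical case $g(u)=l$ for $\mathcal{S}_{1}$ is handled identically, starting from
\[
K_{\phi}^{\mathcal{S}_{1}}+2\delta=\frac{\alpha^{2}\beta^{2}l^{2}}{\left((\beta l)^{2}-(\alpha f(u))^{2}\right)^{2}}
\]
in (\ref{s43}): constancy of the left-hand side forces $f^{2}$, and thus $f$, to be constant, which again collides with $f'^{2}+g'^{2}>0$ in (\ref{s2}).

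I do not expect any real obstacle: the whole argument is an algebraic reading of the explicit formulas (\ref{s42})--(\ref{s43}) together with the non-degeneracy constraints (\ref{s2}) and (\ref{s6}). The only small thing to be careful about is that one must invoke these defining inequalities (which are not vacuous regularity conditions but genuine constraints built into the surfaces) to convert "both $f$ and $g$ constant" into an actual contradiction.
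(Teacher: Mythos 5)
Your proposal is correct and takes essentially the same route as the paper: the paper obtains the corollary by observing from the inverted formula (\ref{s44}) (resp.\ (\ref{s45})) that constancy of $K_{\phi}^{\mathcal{S}_{i}}$ forces $g$ (resp.\ $f$) to be constant, exactly as you deduce directly from (\ref{s42})--(\ref{s43}). You are in fact slightly more complete than the paper, since you spell out the final step of turning ``both $f$ and $g$ constant'' into a contradiction with the non-degeneracy conditions (\ref{s2}) and (\ref{s6}), which the paper leaves implicit.
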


Now, let us suppose that $f(u)=u$. In this case, from (\ref{s39}), the
weighted Gaussian curvatures are%
\begin{equation}
K_{\phi}^{\mathcal{S}_{i}}=\frac{\left(
\begin{array}
[c]{l}%
\alpha^{2}\beta^{2}\left(  g-ug^{\prime}\right)  ^{2}\left(  g^{\prime
2}-\varepsilon\right)  -\varepsilon g^{\prime\prime}\left(  (\alpha
u)^{2}+\varepsilon(\beta g)^{2}\right)  \left(  \alpha^{2}ug^{\prime}%
+\beta^{2}g\right) \\
-2\delta\left(  g^{\prime2}-\varepsilon\right)  ^{2}\left(  (\alpha
u)^{2}+\varepsilon(\beta g)^{2}\right)  ^{2}%
\end{array}
\right)  }{\left(  g^{\prime2}-\varepsilon\right)  ^{2}\left(  (\alpha
u)^{2}+\varepsilon(\beta g)^{2}\right)  ^{2}}. \label{s49}%
\end{equation}

So, we can state the following corollary:

\begin{theorem}
\label{teo2}The timelike general rotational surfaces of first and second types
$\mathcal{S}_{i}$ in $\mathbb{E}_{1}^{4}$ with density $e^{\lambda_{1}%
x^{2}+\lambda_{2}y^{2}+\lambda_{3}z^{2}+\lambda_{4}t^{2}}$ are weighted flat
if and only if, up to parametrization, the meridian curves are determined by
$c_{1}:x_{1}(u)=(u,0,g(u),0)$ and $c_{2}:x_{2}(u)=(u,0,0,g(u)),$ where $g(u)$
is a solution to the following differential equations:

For $\mathcal{S}_{1}$
\begin{equation}
\left(  \arctan\left(  g^{\prime}(u)\right)  \right)  ^{\prime}=A_{3}
\label{s50}%
\end{equation}
and for $\mathcal{S}_{2}$
\begin{equation}
\left(  \log\left(  \frac{1-g^{\prime}(u)}{1+g^{\prime}(u)}\right)  \right)
^{\prime}=2A_{3}, \label{s51}%
\end{equation}
where
\begin{equation}
A_{3}=\frac{-2\delta\left(  (\alpha u)^{2}+\varepsilon(\beta g(u))^{2}\right)
^{2}\left(  -\varepsilon+g^{\prime}(u)^{2}\right)  +\alpha^{2}\beta^{2}\left(
g(u)-ug^{\prime}(u)\right)  ^{2}}{\left(  \varepsilon(\alpha u)^{2}+(\beta
g(u))^{2}\right)  \left(  \beta^{2}g(u)+\alpha^{2}ug^{\prime}(u)\right)  }.
\label{s52}%
\end{equation}

\end{theorem}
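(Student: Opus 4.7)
The plan is to mirror the proof of Theorem \ref{teo1}, which handled the weighted-minimality case under the same specialization $f(u)=u$. The starting point is the explicit formula (\ref{s49}) for $K_{\phi}^{\mathcal{S}_i}$. I would begin by setting $K_{\phi}^{\mathcal{S}_i}=0$; since the denominator $(g'^2-\varepsilon)^2((\alpha u)^2+\varepsilon(\beta g)^2)^2$ is nonzero on the domain (this follows from the nondegeneracy inequalities (\ref{s2}) and (\ref{s6}) together with $f(u)=u$), weighted flatness is equivalent to the vanishing of the numerator of (\ref{s49}).

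Next I would solve this numerator equation for $g''$. Isolating the $g''$-term gives
\begin{equation*}
\varepsilon g''\bigl((\alpha u)^2+\varepsilon(\beta g)^2\bigr)\bigl(\alpha^2 u g'+\beta^2 g\bigr)=\alpha^2\beta^2(g-ug')^2(g'^2-\varepsilon)-2\delta(g'^2-\varepsilon)^2\bigl((\alpha u)^2+\varepsilon(\beta g)^2\bigr)^2.
\end{equation*}
Dividing through by $(g'^2-\varepsilon)\bigl((\alpha u)^2+\varepsilon(\beta g)^2\bigr)\bigl(\alpha^2 u g'+\beta^2 g\bigr)$ and multiplying by $\varepsilon$ (using $\varepsilon^2=1$ and rewriting $\varepsilon\bigl((\alpha u)^2+\varepsilon(\beta g)^2\bigr)=\varepsilon(\alpha u)^2+(\beta g)^2$), the right-hand side becomes exactly $A_{3}$ as defined in (\ref{s52}), so the equation reduces to the compact form
\begin{equation*}
\frac{g''(u)}{g'(u)^{2}-\varepsilon}=A_{3}.
\end{equation*}

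The final step is the same ``integrating factor'' recognition used for Theorem \ref{teo1}. For $\varepsilon=-1$ (the first type $\mathcal{S}_{1}$), the left-hand side equals $\bigl(\arctan g'(u)\bigr)'$, yielding (\ref{s50}). For $\varepsilon=+1$ (the second type $\mathcal{S}_{2}$), the identity $\left(\log\!\frac{1-g'}{1+g'}\right)'=\frac{2g''}{g'^{2}-1}$ turns the equation into (\ref{s51}). Conversely, any $g$ satisfying (\ref{s50}) or (\ref{s51}) reverses the chain of equivalences and makes the numerator of (\ref{s49}) vanish, establishing the ``if'' direction.

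The main obstacle is purely bookkeeping: correctly tracking the signs produced by $\varepsilon=\pm 1$ when clearing denominators, and verifying that the rewriting $\varepsilon\bigl((\alpha u)^2+\varepsilon(\beta g)^2\bigr)=\varepsilon(\alpha u)^2+(\beta g)^2$ reproduces the denominator of $A_{3}$ in (\ref{s52}) exactly. There is no genuine analytic difficulty beyond this algebraic reorganization, since the two ODEs differ only through the sign $\varepsilon$ and the proof is essentially a verbatim adaptation of the argument already given for Theorem \ref{teo1}.
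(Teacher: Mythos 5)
Your proposal is correct and follows essentially the same route as the paper: the paper's own proof likewise passes from the vanishing of the numerator of (\ref{s49}) to the equation $g''(u)/(g'(u)^2-\varepsilon)=A_3$ and then reads off (\ref{s50}) and (\ref{s51}) for $\varepsilon=-1$ and $\varepsilon=1$. Your version merely spells out the denominator-clearing and the identity $\varepsilon((\alpha u)^2+\varepsilon(\beta g)^2)=\varepsilon(\alpha u)^2+(\beta g)^2$ in more detail than the paper does.
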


\begin{proof}
If the timelike general rotational surfaces of first and second types
$\mathcal{S}_{i}$ in $\mathbb{E}_{1}^{4}$ with density $e^{\lambda_{1}%
x^{2}+\lambda_{2}y^{2}+\lambda_{3}z^{2}+\lambda_{4}t^{2}}$ are weighted flat,
then from (\ref{s49}) we have%
\begin{equation}
\frac{g^{\prime\prime}(u)}{-\varepsilon+g^{\prime}(u)^{2}}=\frac
{-2\delta\left(  (\alpha u)^{2}+\varepsilon(\beta g(u))^{2}\right)
^{2}\left(  -\varepsilon+g^{\prime}(u)^{2}\right)  +\alpha^{2}\beta^{2}\left(
g(u)-ug^{\prime}(u)\right)  ^{2}}{\left(  \varepsilon(\alpha u)^{2}+(\beta
g(u))^{2}\right)  \left(  \beta^{2}g(u)+\alpha^{2}ug^{\prime}(u)\right)  }.
\label{s52'}%
\end{equation}
Thus from (\ref{s52'}), we get (\ref{s50}) and (\ref{s51}) for $\varepsilon
=-1$ and $\varepsilon=1,$ respectively.
\end{proof}

Now, let us suppose that $g(u)=u$. In this case, from (\ref{s39}), the
weighted Gaussian curvatures are%
\begin{equation}
K_{\phi}^{\mathcal{S}_{i}}=\frac{\left(
\begin{array}
[c]{l}%
-\varepsilon\alpha^{2}\beta^{2}\left(  f-uf^{\prime}\right)  ^{2}\left(
-\varepsilon+f^{\prime2}\right)  +\left(  (\beta u)^{2}+\varepsilon(\alpha
f)^{2}\right)  \left(  \alpha^{2}f+u\beta^{2}f^{\prime}\right)  f^{\prime
\prime}\\
-2\delta\left(  (\beta u)^{2}+\varepsilon(\alpha f)^{2}\right)  ^{2}\left(
f^{\prime2}-\varepsilon\right)  ^{2}%
\end{array}
\right)  }{\left(  (\beta u)^{2}+\varepsilon(\alpha f)^{2}\right)  ^{2}\left(
f^{\prime2}-\varepsilon\right)  ^{2}}. \label{s53}%
\end{equation}
So with similar procedure of the proof of the Theorem \ref{teo2}, from
(\ref{s53}) we can obtain the following theorem:

\begin{theorem}
The timelike general rotational surfaces of first and second types
$\mathcal{S}_{i}$ in $\mathbb{E}_{1}^{4}$ with density $e^{\lambda_{1}%
x^{2}+\lambda_{2}y^{2}+\lambda_{3}z^{2}+\lambda_{4}t^{2}}$ are weighted flat
if and only if, up to parametrization, the meridian curves are determined by
$c_{1}:x_{1}(u)=(f(u),0,u,0)$ and $c_{2}:x_{2}(u)=(f(u),0,0,u),$ where $f(u)$
is a solution to the following differential equations:

For $\mathcal{S}_{1}$
\begin{equation}
\left(  \arctan\left(  f^{\prime}(u)\right)  \right)  ^{\prime}=A_{4}
\label{s54}%
\end{equation}
and
\begin{equation}
\left(  \log\left(  \frac{1-f^{\prime}(u)}{1+f^{\prime}(u)}\right)  \right)
^{\prime}=2A_{4}, \label{s55}%
\end{equation}
where
\begin{equation}
A_{4}=-\frac{-2\delta\left(  (\beta u)^{2}+\varepsilon(\alpha f(u))^{2}%
\right)  ^{2}\left(  -\varepsilon+f^{\prime}(u)^{2}\right)  -\varepsilon
\alpha^{2}\beta^{2}\left(  f(u)-uf^{\prime}(u)\right)  ^{2}}{\left(  (\beta
u)^{2}+\varepsilon(\alpha f(u))^{2}\right)  \left(  \alpha^{2}f(u)+u\beta
^{2}f^{\prime}(u)\right)  }. \label{s56}%
\end{equation}

\end{theorem}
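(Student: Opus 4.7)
The plan is to follow the same strategy used in the proof of Theorem \ref{teo2}, with the roles of $f$ and $g$ exchanged. First I would substitute $g(u)=u$ (so that $g'=1$ and $g''=0$) into the general weighted Gaussian curvature formula (\ref{s39}) and verify that the result simplifies to the expression (\ref{s53}). The stated meridian curves $c_1:x_1(u)=(f(u),0,u,0)$ and $c_2:x_2(u)=(f(u),0,0,u)$ are read off immediately by setting $g(u)=u$ in the parametrizations (\ref{s1}) and (\ref{s5}).

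Next, I would impose the weighted-flatness condition $K_\phi^{\mathcal{S}_i}=0$ on the numerator of (\ref{s53}). Since only the middle term depends on $f''(u)$ (it appears linearly, as an isolated factor), one can algebraically isolate $f''(u)/(-\varepsilon+f'(u)^2)$ and divide through by the remaining factor $\bigl((\beta u)^2+\varepsilon(\alpha f)^2\bigr)\bigl(\alpha^{2}f+u\beta^{2}f'\bigr)$, yielding the identity $f''(u)/(-\varepsilon+f'(u)^2)=A_4$ with $A_4$ as in (\ref{s56}). This is the direct analog of equation (\ref{s52'}) from the proof of Theorem \ref{teo2}. The divisors are nonvanishing thanks to the non-degeneracy inequalities (\ref{s2}) and (\ref{s6}). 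The main obstacle here is purely bookkeeping: keeping track of the overall sign that produces the leading minus in the definition of $A_4$, and verifying that the $f''$-independent portion of the numerator of (\ref{s53}) rearranges to the exact form appearing on the right-hand side of (\ref{s56}).

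Finally, I would recognize the left-hand side of the master equation as an exact derivative whose form depends on $\varepsilon$. For $\mathcal{S}_1$ one has $\varepsilon=-1$, so $f''/(-\varepsilon+f'^2)=f''/(1+f'^2)=(\arctan f')'$, giving (\ref{s54}). For $\mathcal{S}_2$ one has $\varepsilon=1$, and a short direct differentiation shows that $\frac{d}{du}\log\bigl((1-f')/(1+f')\bigr)=-2f''/(1-f'^{2})=2f''/(-\varepsilon+f'^{2})$, which matches $2A_4$ and gives (\ref{s55}). Each step in the derivation is reversible (one can integrate the ODEs back to recover the flatness condition on the numerator of (\ref{s53})), so the characterization is indeed an "if and only if".
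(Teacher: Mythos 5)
Your proposal is correct and follows essentially the same route the paper intends: the paper proves this theorem only by remarking that it follows ``with similar procedure of the proof of Theorem \ref{teo2}'' from (\ref{s53}), i.e.\ by setting the numerator of (\ref{s53}) to zero, isolating $f''/(-\varepsilon+f'^{2})=A_{4}$, and integrating via $\arctan$ for $\varepsilon=-1$ and the logarithm for $\varepsilon=1$. Your write-up simply makes that outline explicit, including the sign bookkeeping for $A_{4}$, so there is nothing further to add.
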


Now, let we construct two examples for the timelike general rotational
surfaces of first and second types $\mathcal{S}_{i}$ and find their weighted
mean and weighted Gaussian curvatures for different densities.

\begin{example}
If we take $f(u)=\tan u\ $and $g(u)=\sec u$ in (\ref{s1}), then we get the
following general rotational surfaces of first type:%
\begin{equation}
\mathcal{S}_{1}:X(u,v)=(\tan u\cos(\alpha v),\tan u\sin(\alpha v),\sec
u\cosh(\beta v),\sec u\sinh(\beta v)). \label{ex13}%
\end{equation}
The weighted mean and weighted Gaussian curvatures of this surface in
$\mathbb{E}_{1}^{4}$ with density $e^{\lambda_{1}x^{2}+\lambda_{2}%
y^{2}+\lambda_{3}z^{2}+\lambda_{4}t^{2}}$ is%
\begin{equation}
H_{\phi}^{\mathcal{S}_{1}}=\sqrt{\frac{\left(
\begin{array}
[c]{l}%
\left(
\begin{array}
[c]{l}%
\frac{\left(  \alpha^{2}\tan^{2}u-\beta^{2}\sec^{2}u\right)  \sec^{3}u}%
{2}\left(
\begin{array}
[c]{l}%
{\small (}\cos{\small (2u)-3)}\left(
\begin{array}
[c]{l}%
\text{$\lambda_{1}$}{\small +}\text{$\lambda$}_{2}\\
{\small +(}\text{$\lambda$}_{1}{\small -}\text{$\lambda$}_{2}{\small )}%
\cos{\small (2\alpha v)}%
\end{array}
\right)  \tan^{2}{\small u}\sec^{2}{\small u}\\
{\small -4}\tan^{2}{\small u+2}\left(  \tan^{2}u+\sec^{2}u\right)
\end{array}
\right) \\
+\left(  \tan^{2}u+\sec^{2}u\right)  \left(  \alpha^{2}\tan^{2}u-\beta^{2}%
\sec^{2}u\right)  \left(  \text{$\lambda$}_{3}{\small -}\text{$\lambda_{4}$%
}{\small +(}\text{$\lambda$}_{3}{\small +}\text{$\lambda$}_{4}{\small )}%
\cosh{\small (2\beta v)}\right)  \sec^{5}u\\
+\left(  \tan^{2}u+\sec^{2}u\right)  \left(  \alpha^{2}\tan^{2}u\sec
u+\beta^{2}\sec^{3}u\right)  \sec^{2}u
\end{array}
\right)  ^{2}\\
{\small +(}\tan^{2}{\small u}\sec^{8}{\small u}\left(  \beta^{2}\sec
^{2}u-\alpha^{2}\tan^{2}u\right)  {\small (\beta(}\text{$\lambda$}%
_{2}{\small -}\text{$\lambda$}_{1}{\small )}\sin{\small (2\alpha v)}\\
{\small +\alpha(}\text{$\lambda$}_{3}{\small +}\text{$\lambda$}_{4}%
{\small )}\sinh{\small (2\beta v))}^{2}{\small )}\left(  \tan^{2}u+\sec
^{2}u\right)  ^{3}%
\end{array}
\right)  }{4\left(  \tan^{2}u+\sec^{2}u\right)  ^{3}\left(  \beta^{2}\sec
^{2}u-\alpha^{2}\tan^{2}u\right)  ^{2}\cos^{-6}u}} \label{ex14}%
\end{equation}
and%
\begin{equation}
K_{\phi}^{\mathcal{S}_{1}}=\frac{\left(  \alpha^{2}-\beta^{2}\right)  \left(
\alpha^{2}\tan^{4}(u)+\beta^{2}\sec^{4}(u)\right)  }{\left(  \tan^{2}%
(u)+\sec^{2}(u)\right)  ^{2}\left(  \beta^{2}\sec^{2}(u)-\alpha^{2}\tan
^{2}(u)\right)  ^{2}}-2\delta, \label{ex15}%
\end{equation}
respectively. If we take $\alpha=\beta$ in (\ref{ex14}) and (\ref{ex15})$,$
then we get the weighted mean and weighted Gaussian curvatures of timelike
general rotational surfaces of first type (\ref{ex13}) as%
\begin{equation}
H_{\phi}^{\mathcal{S}_{1}}=\frac{\left\vert (7\lambda-12\mu+4(\mu
-2(\lambda+1))\cos(2u)+\lambda\cos(4u)+8)\sec u\right\vert }{2\sqrt
{2(3-\cos(2u))^{3}}} \label{ex17}%
\end{equation}
and%
\begin{equation}
K_{\phi}^{\mathcal{S}_{1}}=-4(\lambda+\mu) \label{ex18}%
\end{equation}
in $\mathbb{E}_{1}^{4}$ with density $e^{\lambda(x^{2}+y^{2})+\mu(z^{2}%
-t^{2})}.$ In Figure 3(a), one can see the projection of the surface
(\ref{ex13}) for $\alpha=\beta=3$ into $x_{1}x_{2}x_{3}$-space.
\end{example}

\begin{example}
If we take $f(u)=\sec u\ $and $g(u)=\tan u$ in (\ref{s5}), then we get the
following timelike general rotational surfaces of second type:%
\begin{equation}
\mathcal{S}_{2}:X(u,v)=(\sec u\cos(\alpha v),\sec u\sin(\alpha v),\tan
u\sinh(\beta v),\tan u\cosh(\beta v)). \label{ex19}%
\end{equation}
The weighted mean and weighted Gaussian curvatures of this surface in
$\mathbb{E}_{1}^{4}$ with density $e^{\lambda_{1}x^{2}+\lambda_{2}%
y^{2}+\lambda_{3}z^{2}+\lambda_{4}t^{2}}$ is%
\begin{equation}
H_{\phi}^{\mathcal{S}_{2}}=\frac{1}{2}\sqrt{%
\begin{array}
[c]{l}%
\frac{\left(  \left(  \beta(\text{$\lambda$}_{1}-\text{$\lambda$}_{2}%
)\sin(2\alpha v)+\alpha(\text{$\lambda$}_{3}+\text{$\lambda$}_{4})\sinh(2\beta
v)\right)  ^{2}\sin^{2}u\right)  }{\cos^{4}u(\alpha^{2}\sec^{2}u+\beta^{2}%
\tan^{2}u)}+\\
\left(
\begin{array}
[c]{l}%
\left(  -\text{$\lambda$}_{3}+\text{$\lambda$}_{4}\right)  \tan^{2}u+\left(
\text{$\lambda$}_{1}+\text{$\lambda$}_{2}+(\text{$\lambda$}_{1}-\text{$\lambda
$}_{2})\cos(2\alpha v)\right)  \sec^{2}u\\
+(\text{$\lambda$}_{3}+\text{$\lambda$}_{4})\tan^{2}u\cosh(2\beta v)+2
\end{array}
\right)  ^{2}%
\end{array}
} \label{exy1}%
\end{equation}
and
\begin{equation}
K_{\phi}^{\mathcal{S}_{2}}=\frac{4\left(  \alpha^{2}+\beta^{2}\right)  \left(
\alpha^{2}+\beta^{2}\sin^{4}u\right)  -2\delta\left(  2\alpha^{2}+\beta
^{2}\left(  1-\cos(2u)\right)  \right)  ^{2}}{\left(  2\alpha^{2}+\beta
^{2}\left(  1-\cos(2u)\right)  \right)  ^{2}} \label{exy2}%
\end{equation}
respectively. If we take $\alpha=\beta$ in (\ref{exy1}) and (\ref{exy2}) then
we get the weighted mean and weighted Gaussian curvatures of timelike general
rotational surfaces of second type (\ref{ex19}) as%
\begin{equation}
H_{\phi}^{\mathcal{S}_{2}}=\frac{\left\vert ((\mu+1)\cos(2u)+2\lambda
-\mu+1)\right\vert }{2\cos^{2}u} \label{ex20}%
\end{equation}
and%
\begin{equation}
K_{\phi}^{\mathcal{S}_{2}}=\frac{4(6(\lambda+\mu)-1)\cos(2u)+(1-2(\lambda
+\mu))\cos(4u)-38\left(  \lambda+\mu\right)  +11}{(\cos(2u)-3)^{2}}
\label{ex21}%
\end{equation}
in $\mathbb{E}_{1}^{4}$ with density $e^{\lambda(x^{2}+y^{2})+\mu(z^{2}%
-t^{2})}.$ In Figure 3(b), one can see the projection of the surface
(\ref{ex19}) for $\alpha=\beta=3$ into $x_{1}x_{2}x_{3}$-space.
\end{example}

\begin{figure}[H]
\centering
\includegraphics[
	height=3.3in, width=5.6in
	]	{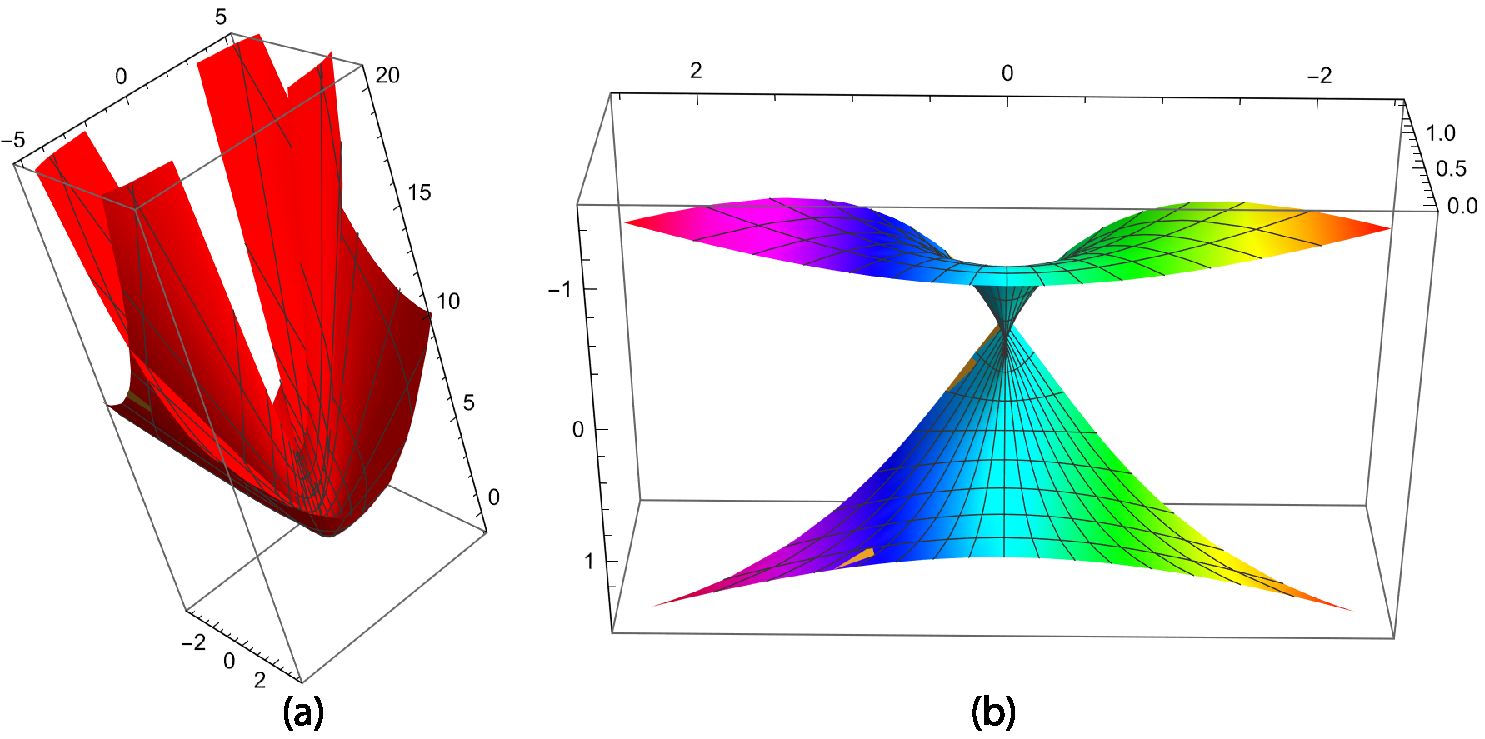}\caption{The projection of the timelike general rotational
surface of first type (\ref{ex13}) into $x_{1}x_{2}x_{3}$-space \textbf{(a)}
and the projection of the timelike general rotational surface of second type
(\ref{ex19}) into $x_{1}x_{2}x_{3}$-space \textbf{(b)}}%
\label{fig:3}%
\end{figure}

\bigskip

\bigskip

\end{document}